\definecolor{labelkey}{rgb}{0.6,0,1}
\theoremstyle{plain}
\newtheorem{theorem}{Theorem}[section]
\newtheorem{lemma}[theorem]{Lemma}
\newtheorem{hypothesis}[theorem]{Hypothesis}
\theoremstyle{definition}
\newtheorem{definition}[theorem]{Definition}
\def\bhyp#1{\begin{equation}\label{#1}\begin{array}{c}}
\def\ehyp{\end{array}\end{equation}}
\newcounter{cst}
\theoremstyle{remark}
\numberwithin{equation}{section}
\numberwithin{figure}{section}
\newcommand{\RR}{{\mathbb R}}
\newcommand{\NN}{{\mathbb N}}
\def\dsp{\displaystyle}
\def\O{\Omega}
\def\bfn{\mathbf{n}}
\def\disc{{\mathcal D}}
\def\dr{\partial}
\newcommand{\cH}{{\mathcal H}}
\def\cF{{\mathcal F}}
\def\cD{{\mathbb D}}
\def\cP{{\mathbb P}}
\def\cQ{{\mathcal Q}}
\def\cL{{\mathcal L}}
\def\cE{{\mathbb E}}
\def\half{{\dsp\frac{1}{2}}}
\def\tn{{L^2(\cD \times (0,T))^d}}
\def\tnn{{L^2(\cD \times (0,T))}}
\def\sn{{L^2(\cD)}}
\def\ld{{\langle}} 
\def\rd{{\rangle}}
\def\tr{{{\rm Tr}}}
\def\cT{{\mathbb T}}
\newif\ifcorr\corrtrue
\definecolor{violet}{rgb}{0.580,0.,0.827}
\def\bpsi{{\boldsymbol \psi}}
\def\btheta{{\boldsymbol \theta}}
\newcommand{\ud}{\, \mathrm{d}} 
\def\div{\mathop{\rm div}}
\title [Generic Numerical Analysis of Stochastic Reaction Diffusion Model]{Generic Numerical Analysis of Stochastic Reaction Diffusion Model with applications in excitable media }
\author{Yahya Alnashri }
\address[Yahya Alnashri]{Department of Mathematics, Al-Qunfudah University College, Umm Al-Qura University, Saudi Arabia}
\email{yanashri@uqu.edu.sa}
\author{Hasan Alzubaidi}
\address[Hasan Alzubaidi]{Department of Mathematics, Al-Qunfudah University College, Umm Al-Qura University, Saudi Arabia}
\email{hmzubaidi@uqu.edu.sa}
\keywords{Stochastic reaction-diffusion model, Multiplicative noise, A gradient discretisation method (GDM), Gradient schemes, Convergence analysis, Non conforming numerical methods, Finite volume schemes, Hybrid mixed mimetic (HMM) method, Wave 
propagation failure, Backfiring of waves.}     
\date{\today}
\begin{document}
\newcommand{\subscript}[2]{$#1 _ #2$}

\begin{abstract}
The stochastic reaction-diffusion model driven by a multiplicative noise is examined. We construct the gradient discretisation method (GDM), an abstract framework combining several numerical method families. The paper provides the discretisation and proves the convergence of the approximate schemes using a compactness argument that works under natural assumptions on data. We also investigate, using a finite volume method, known as the hybrid mixed mimetic (HMM) approach, the effects of multiplicative noise on the dynamics of the travelling waves in the excitable media displayed by the model. Particularly, we consider how sufficiently high noise can cause waves to backfire or fail to propagate.
\end{abstract}

\maketitle
\raggedbottom 

\section{Introduction}
\label{introduction}
A very active research area, stochastic reaction-diffusion models are now considered essential modelling instruments for describing and analysing wave phenomena that occur in excitable media  \cite{N1,N2,N3}. A case in point is the electrical activity waves that travel through cardiac muscle to produce a heartbeat. These technologies are based on the premise that spiral waves and the onset of arrhythmias are caused by noise and irregularities  \cite{N4}. Noise-induced patterns in excitable media also include the generation and propagation of travelling pulses along nerve cells \cite{N5,N6}, and the waves of excitation in chemical reaction systems  \cite{N7}.
\paragraph{}
In the current work, we look on the following stochastic reaction diffusion model driven by a multiplicative noise:
\begin{subequations}\label{problem-srm}
\begin{align}
&\ud \bar v-\div(\nabla\bar v) \ud t=S(\bar v)\ud t+ g(\bar v)\ud B(t) \quad \mbox{ in }\quad \O_T:= \cD\times (0,T),\label{rm-strong1}\\
&\bar v = 0, \quad  \mbox{ on }\quad \dr\cD\times (0,T), \label{rm-strong2}\\
&\bar v(\cdot,0)=v_0 \quad\mbox{ in }\quad \cD, \label{rm-strong3}
\end{align}
\end{subequations}
where $T>0$, $\cD \subset \RR^d$, $d=\{1,2,3\}$, with an initial condition $v_0 \in L^2(\cD)$. Moreover, $B:=\{B(t):t\in [0,T]\}$ is a Brownian motion on a Hilbert space $\cH$ with a covariance operator $\cQ$. Precise assumptions on data will be detailed in the subsequent section.

A wealthy body of theoretical framework has been built for stochastic reaction diffusion equations; see for example \cite{N8,N9}.
However, the irregularity of the noise term that perturbs the model makes developing an analytical solution for such a model difficult. Consequently, in this case, accurate numerical solutions are essential. Models of this nature have been studied and analysed using a variety of numerical approaches,  including finite difference methods \cite{N10,N11,N12}, spectral methods \cite{N4,N13,N14} and, finite element methods \cite{N15,N16,N17}. Most earlier research, it appears, concentrated only on conforming numerical schemes, which are unable to maintain the model's physical properties. Therefore, the main objective of our work is to develop a generic numerical analysis of the equation \eqref{problem-srm} by means of the gradient discretisation method (GDM) \cite{S1}. The GDM approach involves a broad class of both conforming and non-conforming schemes. For instance, hybrid high-order and virtual elements methods, hybrid mimetic mixed methods, mixed finite element methods, and conforming and non-conforming finite element methods. Our findings are applicable to all these families of methods. As this approach removes the need to prove convergence for each framework, it greatly simplifies convergence analysis research.  

A convergence analysis based on the GDM framework has been described in the literature for the deterministic version of the stochastic reaction diffusion model ($g(\bar v)=0$). Some numerical examples based upon the finite volume approach that fit in the GDM approach were also presented to verify the theoretical outcomes  \cite{YH-DRDM-1,YH-DRDM-2}. To the best of our knowledge,
 the extant literature has not yet presented a generic numerical analysis of the stochastic reaction diffusion model  \eqref{problem-srm}. The GDM analysis was presented for a general stochastic Stefan problem with multiplicative noise \cite{J-SPDE-2} and a general stochastic evolution problem based on a Leray-Lions type operator \cite{J-SPDE-1}. In this work, we provide a comprehensive and unified convergence analysis of the stochastic reaction diffusion equation \eqref{problem-srm} using the GDM method. To the best of our knowledge, this analysis for such a model yields the first results applicable to various conforming and non-conforming approaches.  

By using a finite volume method, known as the hybrid mimetic mixed (HMM) \cite{Eymard-2010}, we also include some numerical tests to investigate and evaluate the behaviour of the travelling wave solutions of the investigated model under the influence of the multiplicative noise. Noise generally has several effects on excitable media; see \cite{N18} and cited works within for a thorough analysis. In this work, we investigate two noise-induced travelling wave dynamics phenomena that the investigated model displays using the HMM approach. The first is referred to as the phenomenon of wave propagation failure because of high noise, where no waves can be observed in this case \cite{N19}. Thus, when the noise rises, a travelling waveform at first starts to lose its shape. Thereafter, the wave is destroyed when the noise becomes strong enough \cite{N20}. The second wave dynamics phenomenon that the model under study displays is wave backfiring. In this regime, noise-related increases in travelling wave widths. The wave can be split into two halves that travel in opposite directions at the same speed if the transition from the excited to the quiescent states is induced in the middle of the wave for sufficiently large noise \cite{N21,N22}.  

The remainder of this paper is presented as follows. We present the weak martingale solutions and mathematical formulation of the model in Section \ref{sec-pre}. The approximate scheme is covered in Section \ref{sec-disc}. Section \ref{sec-est} establishes abound on numerical solutions and the noise term, which are essential to obtain the convergence results. In Section \ref{sec-theorem}, we study and prove the main results, Theorem \ref{theorem-1}. Section \ref{sec-num} gives some numerical tests that investigate the behaviour of the stochastic Allen-Cahn equation solutions as an example of the stochastic reaction-diffusion model \eqref{problem-srm}. Moreover, we investigate the influences of the multiplicative noise on the dynamics of such a model's travelling wave solutions. The last section contains our final thoughts and observations.

\section{Preliminaries}
\label{sec-pre}

\subsection*{{\bf Notations and assumptions:}} Let the spatial domain $\cD \subset \RR^d\;(d \geq 1)$ be an open bounded connected set and the triple $(\O,\cF,\cP)$ be a probability space, where $(\cF_t)_{t\in[0,T]}$ is a filtration satisfying the classical conditions. Given $T\in (0,\infty)$, let $(\O,\cF,(\cF_t)_{t\in[0,T]},\cP)$ be a stochastic basis. Respect to this stochastic basis, let $B=\{B(t): 0\leq t \leq T\}$ denotes an $\cH$-valued $\cQ$-Brownian motion with covariance operator $\cQ$ such that $\mbox{Tr}(\cQ)<\infty$, and which is of the form:
\begin{equation}\label{Wiener-eq}
B(t)=\dsp\sum_{k=0}^\infty q_K B_K(t)e_K,
\end{equation}
where $\{e_K\}_{K\in\NN}$ is an orthonormal basis of $\cH$ consisting of eigenvectors of $\cQ$ with finite sum of non negative eigenvalues $q_K$. Here $\{B_k\}_{k\in\NN}$ is a family of independent $\cF_t$--adapted scalar Brownian motion. The Banach space $\cL(\cH,L^2(\cD)):=\{ \cL:\cH \to L^2(\cD)\;:\; \cL \mbox{ is a bounded linear operator } \}$, and it is endowd with the operator norm denoted by $\|\cdot\|_{\cL(\cH,L^2(\cD))}$. Throughout this paper, $\langle \cdot,\cdot \rangle_{L^2}$ denotes the inner product in $L^2(\cD)$, and $\cE[\cdot]$ denotes the expectation operator. Moreover, we assume the following standard conditions:
\begin{hypothesis}\label{hyp-1}
The model's data is assumed to satisfy the following:
\begin{itemize}
\item Let $\cD \subset \RR^d\;(d \geq 1)$ be an open bounded connected set, and $T>0$.
\item Let $g:L^2(\cD) \to \cL(\cH,L^2(\cD))$ be a continuous functional satisfying the following condition:
\begin{equation}\label{eq-g-funct}
\begin{aligned}
&\exists g_1,g_2>0 \mbox{ such that, for all  } \varphi\in L^2(\cD),\\
&\|g(\varphi)\|_{\cL(\cH,L^2(\cD))}^2 \leq g_1\|\varphi\|_{L^2(\cD)}^2+g_2.
\end{aligned}
\end{equation}
\item Let the reaction function $S$ satisfy the Lipschitz continuity condition with a positive Lipschitz constant $L$. 
\item $v_0 \in L^2(\cD)$.

\end{itemize}
\end{hypothesis}

\subsection*{{\bf Weak formulation:}} Under the above assumptions on model data, a solution to the stochastic equation \eqref{problem-srm} can be understood in a variational form. Let the solution be a stochastic process taking values in $H_0^1(\cD)$. This means
\[
\bar v:[0,T] \times \cD \to H_0^1(\cD), \quad (t,\omega) \mapsto \bar v(t,\cdot,s). 
\]
Multiplying the equation \eqref{problem-srm} with a test function $\varphi \in H_0^1(\cD)$, integrating over the time-space domain and performing integration by parts over the spatial variable generates the weak solution of our model, as in the following definition.

\begin{definition}
Assume that $v_0 \in L^2(\O,\cF_0,\cP,L^2(\cD))$. A continuous  $\bar v:[0,T]\times\cD \to H_0^1(\cD)$ is a weak solution of \eqref{problem-srm} if
\begin{itemize}
\item the process $\bar v$ is a progressively measurable,
\item $\bar v(\cdot,\theta)\in C([0,T];L_{\rm w}^2(\cD))$ for $\mathbb P$-a.s. $\theta\in\O$, where $L_{\rm w}^2(\cD)$ is equipped with a weak topology,
\item $\cE\Big[\dsp\sup_{t\in[0,T]}\|\bar v(t)\|_{L^2(\cD)}^2\Big] <\infty$ and
$\cE\Big[\|\bar v\|_{L^2(0,T;H_0^1(\cD))}^2\Big] <\infty$,
\item the following inequality holds $\mathbb P$-almost surely for all $t\in[0,T]$ and for all $\varphi\in H_0^1(\cD)$,
\begin{equation}\label{rm-weak}
\begin{aligned}
\langle \bar v(t),\varphi \rangle_{L^2}
&-\langle v_0,\varphi \rangle_{L^2}
+\dsp\int_0^t \langle \nabla\bar v(s),\nabla\varphi \rangle_{L^2} \ud s\\
&=\dsp\int_0^t \langle g(\bar v)(s,\cdot)\ud B(s),\varphi \rangle_{L^2}
+\dsp\int_0^t \langle S(\bar v(s)),\varphi \rangle_{L^2} \ud s,
\end{aligned}
\end{equation}
in which the first integral in the RHS is the It{\^o} integral in $L^2(\cD)$. 
\end{itemize}
\end{definition}

\section{Approximate Scheme}\label{sec-disc}
Here, we use the gradient discretisation method to build our generic analysis. To construct this framework, we start by defining some discrete space and operators as follows:

\begin{definition}\label{def-gd-rm}
Let $\cD$ be an open subset of $\RR^d$ (with $d \geq 1$) and $T>0$. A gradient discretisations considered here is $\disc=(X_{\disc,0},\Pi_\disc, \nabla_\disc, J_\disc, (t^{(n)})_{n=0,...,N}) )$, where
\begin{enumerate}
\item the set of discrete unknowns $X_{\disc,0}$ is a finite dimensional vector space on $\RR$,
\item the linear mapping $\Pi_\disc : X_{\disc,0} \to L^2(\cD)$ is the reconstructed function, 
\item the linear mapping $\nabla_\disc : X_{\disc,0} \to L^2(\cD)^d$ is a reconstructed gradient, which must be chosen such that
\begin{equation}\label{norm-disc}
\| \varphi \|_\disc = \| \nabla_\disc \varphi \|_{L^2(\cD)^d} 
\end{equation}
is a norm on $X_{\disc,0}$, 
\item the linear continuous mapping $J_\disc: L^2(\cD) \to X_{\disc,0}$ is an interpolation operator for the
 initial condition,
\item $t^{(0)}=0<t^{(1)}<....<t^{(N)}=T$ are time steps. 
\end{enumerate}
\end{definition}
For $(u^{(n)})_{n\in\NN} \subset X_{\disc,0}$, the functions $\Pi_\disc u: (0,T]\to L^2(\cD)$, $\nabla_\disc u: (0,T]\to L^2(\cD)^d$, and $\delta_\disc u : (0,T] \to L^2(\cD)$ are piecewise constant in time and given by: For a.e $x\in\cD$, for all $n\in\{0,...,N-1 \}$ and for all $t\in (t^{(n)},t^{(n+1)}]$,
\begin{equation*}
\begin{aligned}
&\Pi_\disc u(x,0):=\Pi_\disc u^{(0)}(x), \quad \Pi_\disc u(x,t):=\Pi_\disc u^{(n+1)}(x),\\
&\nabla_\disc u(x,t):=\nabla_\disc u^{(n+1)}(x), \quad
\delta_\disc u(t)=\delta_\disc^{(n+\frac{1}{2})}u:=\frac{\Pi_\disc(u^{(n+1)}-u^{(n)})}{\delta t^{(n+\frac{1}{2})}},
\end{aligned}
\end{equation*}
with takink $\delta t^{(n+\frac{1}{2})}=t^{(n+1)}-t^{(n)}$ and $\delta t_\disc=\max_{n=0,...,N-1}\delta t^{(n+\frac{1}{2})}$.

Using the elements of $\disc$ in the problem \eqref{rm-weak} yields an approximate method called a gradient scheme. 

\begin{definition}
The related gradient scheme for the continuous problem \eqref{rm-weak} is to seek $v\in X_{\disc,0}$ satisfying the following inequality:
\begin{equation}\label{gs-rm-weak}
\begin{aligned}
&\langle d_\disc^{(n+\frac{1}{2})}v,\Pi_\disc\varphi \rangle_{L^2}
+\delta t_\disc \langle \nabla_\disc v^{(n+1)},\nabla_\disc\varphi \rangle_{L^2}\\
&\quad=\dsp\langle g(\Pi_\disc v^{(n)})\Delta^{(n+1)}B,\Pi_\disc\varphi \rangle_{L^2}
+\delta t_\disc \langle S(\Pi_\disc v^{(n)}),\Pi_\disc\varphi \rangle_{L^2}, \quad \varphi\in X_{\disc,0}.
\end{aligned}
\end{equation}
\end{definition}
The gradient discretisation elements must enjoy some properties to build convergent schemes, as stated below. Note that, in practice, different numerical methods encompassed in the gradient discretisation framework verify these properties. See \cite[Chapter 7]{S1}, for more details.

First, for $u_1\in H_0^1(\cD)$, let
\begin{equation}\label{cons-rm1}
S_\disc(u_1)= 
\min_{u_2\in X_{\disc,0}}\left(\| \Pi_\disc u_2 - u_1 \|_{L^{2}(\cD)}
+ \| \nabla_\disc u_2 - \nabla u_1 \|_{L^{2}(\cD)^{d}}\right).
\end{equation}
Next, for $\btheta \in H_{\rm div}:=\{\btheta \in L^2(\cD)^d\;:\; {\rm div}\bpsi \in L^2(\cD),\; \btheta\cdot\bfn=0 \mbox{ on } \dr\cD  \}$, let
\begin{equation}\label{long-rm}
W_\disc(\btheta)
 = \max_{u_2\in X_{\disc,0}\setminus \{0\}}\frac{\Big| \langle \nabla_\disc u_2,\btheta \rangle_{L^2(\cD)^d} + \langle \Pi_\disc u_2,\div (\btheta) \rangle_{L^2(\cD)} \Big|}{\| u_2\|_\disc }.
\end{equation} 

\begin{definition}
A sequence $(\disc_m)_{m\in \NN}$ of gradient discretisations is
\begin{itemize}
\item consistent if 
\begin{enumerate}
\item $\dsp\lim_{m\to\infty}S_{\disc_m}(u_1)=0$, for all $u_1 \in H_0^1(\cD)$, 
\item $\Pi_{\disc_m}J_{\disc_m}\varphi \to \varphi$ strongly in $L^2(\cD)$,, for all $u_1 \in L^2(\cD)$,
\item $\delta t_{\disc_m} \to 0$.
\end{enumerate}
\item limit--conformity if $\dsp\lim_{m\to\infty}W_{\disc_m}(\btheta)=0$, for all $\btheta \in H_{\rm div}$.
\item compact if for any sequence $(u_m )_{m\in\NN} \in X_{\disc_m}$, such that $(\| u_m \|_{\disc_m})_{m\in \NN}$ is bounded, the sequence $(\Pi_{\disc_m}u_m )_{m\in \NN}$ is relatively compact in $L^2(\cD)$. 
\end{itemize}
\end{definition} 

\section{Discrete Energy Estimates}\label{sec-est}
Using the same reasonings as in \cite{J-SPDE-1}, we establish here some estimates on solutions and their gradients. Throughout this section, the letter $C$ denotes different positive constants, which do not depend on the discretisation, and $C_0:=|S(0)|$.

\begin{lemma}
\label{lemma-1}
Let $v$ be a solution to the approximate problem \eqref{gs-rm-weak}. Then, for some positive constant $C$, we have
\begin{equation}\label{est-srm-1}
\begin{aligned}
\cE\Big[\max_{1\leq n\leq N}\|\Pi_\disc v^{(n)} \|_{L^2(\cD)}^2
&+\|\nabla_\disc v\|_{L^2(\cD \times (0,T))^d}^2\\
&+\dsp\sum_{n=0}^{N-1}\|\Pi_\disc v^{(n+1)}-\Pi_\disc v^{(n)} \|_{L^2(\cD)}^2
\Big]\\
&\leq C.
\end{aligned}
\end{equation}
\end{lemma}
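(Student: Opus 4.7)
The plan is to test the gradient scheme \eqref{gs-rm-weak} with $\varphi = v^{(n+1)}$ and apply the algebraic identity $(a-b)a = \tfrac12(a^2 - b^2) + \tfrac12(a-b)^2$ to the discrete time derivative, producing an identity of the form
\[
\tfrac12\|\Pi_\disc v^{(m)}\|_{L^2(\cD)}^2 + \tfrac12\sum_{n=0}^{m-1}\|\Pi_\disc v^{(n+1)} - \Pi_\disc v^{(n)}\|_{L^2(\cD)}^2 + \sum_{n=0}^{m-1}\delta t\,\|\nabla_\disc v^{(n+1)}\|_{L^2(\cD)^d}^2 = \tfrac12\|\Pi_\disc v^{(0)}\|_{L^2(\cD)}^2 + R_m + N_m,
\]
where $R_m$ collects the reaction contributions and $N_m$ the stochastic ones. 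The Lipschitz property gives $|S(\xi)| \le C_0 + L|\xi|$, so Cauchy--Schwarz and Young's inequality control $R_m$ by $C + C\sum_{n=0}^{m-1}\delta t\,\|\Pi_\disc v^{(n)}\|_{L^2(\cD)}^2$, which will be absorbed by discrete Gronwall at the end.

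For the stochastic term $N_m$, the key manoeuvre is to split each summand as
\[
\ld g(\Pi_\disc v^{(n)})\Delta^{(n+1)}B,\Pi_\disc v^{(n+1)}\rd_{L^2} = \ld g(\Pi_\disc v^{(n)})\Delta^{(n+1)}B,\Pi_\disc v^{(n)}\rd_{L^2} + \ld g(\Pi_\disc v^{(n)})\Delta^{(n+1)}B,\Pi_\disc(v^{(n+1)} - v^{(n)})\rd_{L^2}.
\]
The first summand is a martingale increment: $v^{(n)}$ is $\cF_{t^{(n)}}$-measurable and $\Delta^{(n+1)}B$ is independent of $\cF_{t^{(n)}}$, so its expectation vanishes. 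The second summand is controlled by Young's inequality, leaving a portion $\tfrac14\|\Pi_\disc(v^{(n+1)}-v^{(n)})\|_{L^2(\cD)}^2$ which is absorbed on the left-hand side, together with a residual whose expectation is bounded via a discrete It\^o isometry, $\cE\bigl[\|g(\Pi_\disc v^{(n)})\Delta^{(n+1)}B\|_{L^2(\cD)}^2\bigr] \le \tr(\cQ)\,\delta t\,\cE\bigl[\|g(\Pi_\disc v^{(n)})\|_{\cL(\cH,L^2(\cD))}^2\bigr]$, and then by the growth condition \eqref{eq-g-funct}. Taking expectation in the summed identity and applying discrete Gronwall produces uniform-in-$m$ control of $\cE\|\Pi_\disc v^{(m)}\|_{L^2(\cD)}^2$, along with the stated bounds on the summed gradient and jump terms.

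The main obstacle is upgrading this pointwise-in-$m$ bound to the $\cE[\max_{1\le n\le N}\,\cdot\,]$ estimate required by \eqref{est-srm-1}. For this I would first take $\sup_{1\le m\le N}$ inside the discrete energy identity \emph{before} taking expectation. The deterministic parts go through unchanged, but the stochastic part becomes the running maximum of the discrete martingale $M_m := \sum_{n=0}^{m-1}\ld g(\Pi_\disc v^{(n)})\Delta^{(n+1)}B,\Pi_\disc v^{(n)}\rd_{L^2}$, and the discrete Burkholder--Davis--Gundy inequality gives
\[
\cE\Big[\max_{1\le m\le N}|M_m|\Big] \le C\,\cE\Bigl[\Bigl(\sum_{n=0}^{N-1}\|g(\Pi_\disc v^{(n)})\Delta^{(n+1)}B\|_{L^2(\cD)}^2\,\|\Pi_\disc v^{(n)}\|_{L^2(\cD)}^2\Bigr)^{1/2}\Bigr].
\]
Factoring $\max_n\|\Pi_\disc v^{(n)}\|_{L^2(\cD)}$ out of the square root and applying Young's inequality with a small parameter $\epsilon$ splits this into $\epsilon\,\cE[\max_n\|\Pi_\disc v^{(n)}\|_{L^2(\cD)}^2]$, absorbed on the left, and a remainder controlled by the already-bounded $\cE\sum_n\delta t(1 + \|\Pi_\disc v^{(n)}\|_{L^2(\cD)}^2)$. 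A final discrete Gronwall closes the estimate. The delicate point is the simultaneous absorption of the jump contributions and of the running maximum: the Young parameter must be tuned small enough to leave positive coefficients in front of both terms on the left-hand side, while keeping the resulting Gronwall constant uniform in $N$.
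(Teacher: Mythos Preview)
Your proposal is correct and follows essentially the same route as the paper: test with $v^{(n+1)}$, use the identity $(a-b)a=\tfrac12(a^2-b^2)+\tfrac12(a-b)^2$, split the noise term into a martingale part (zero expectation) and a jump part absorbed via Young, apply discrete Gronwall to get $\max_n\cE[\,\cdot\,]$, then take the supremum before expectation and use the discrete Burkholder--Davis--Gundy inequality together with the already-obtained bound to upgrade to $\cE[\max_n\,\cdot\,]$. The only cosmetic difference is that the paper does not invoke a second Gronwall step at the end: after BDG and the $\epsilon$-absorption of $\cE[\max_n\|\Pi_\disc v^{(n)}\|^2]$, the remaining terms on the right are all controlled by $\max_n\cE[\|\Pi_\disc v^{(n)}\|^2]$, which is already bounded, so the argument closes directly.
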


\begin{proof}
It is known that
\begin{equation}\label{equality-1}
(a_1 -a_2)a_1=\frac{1}{2}(a_1^2-a_2^2)+\frac{1}{2}(a_1-a_2)^2,\quad \forall a_1,a_2 \in \RR.
\end{equation}
Taking $\varphi = v^{(n+1)}$ in \eqref{gs-rm-weak} as a test function and use the above relation to get
\begin{equation}\label{est-srm-11}
\begin{aligned}
&\half \|\Pi_\disc v^{(n+1)}\|_{\sn}^2 + \half \|\Pi_\disc v^{(n+1)}-\Pi_\disc v^{(n)}\|_{\sn}^2\\
&\qquad+\delta t_\disc \|\nabla_\disc v^{(n+1)}\|_{\tn}^2\\
&=\half \|\Pi_\disc v^{(n)}\|_{\sn}^2
+\ld g(\Pi_\disc v^{(n)})\Delta^{(n+1)} B, \Pi_\disc(v^{(n+1)}-v^{(n)}) \rd\\
&\qquad+\ld g(\Pi_\disc v^{(n)})\Delta^{(n+1)} B, \Pi_\disc v^{(n)} \rd
+\delta t_\disc\ld S(\Pi_\disc v^{(n+1)}),\Pi_\disc v^{(n+1)} \rd.
\end{aligned}
\end{equation}
Summing over $n=0,...,k$, with $k\in\{0,...,N-1\}$, and performing the Cauchy--Schwarz inequality yield 
\begin{equation*}
\begin{aligned}
&\half \|\Pi_\disc v^{(k+1)}\|_{\sn}^2
+\half\dsp\sum_{n=0}^k \|\Pi_\disc v^{(n+1)}-\Pi_\disc v^{(n)}\|_{\sn}^2\\
&\quad+\delta t_\disc\dsp\sum_{n=0}^k\|\nabla_\disc v^{(n+1)}\|_{\tn}^2\\
&\leq \half \|\Pi_\disc v^{(0)}\|_{\sn}^2
+\dsp\sum_{n=0}^k \|g(\Pi_\disc v^{(n)})\|_\cH \|\Delta^{(n+1)}B\|_\cH \|\Pi_\disc v^{(n+1)}-\Pi_\disc v^{(n)}\|_{\sn}\\
&\quad+\dsp\sum_{n=0}^k \ld g(\Pi_\disc v^{(n)})\Delta^{(n+1)}B,\Pi_\disc v^{(n)} \rd\\
&\quad+\delta t_\disc\dsp\sum_{n=0}^k\|S(\Pi_\disc v^{(n+1)})\|_{\tnn}\|\Pi_\disc v^{(n+1)}\|_{\tnn}.
\end{aligned}
\end{equation*}
Applying Young's inequality $\alpha\beta\leq \alpha^2+\frac{\beta^2}{4}$ to the second term on the RHS and the Lipschitz continuity assumption to the last term on RHS, we have
\begin{equation}
\begin{aligned}
&\half \|\Pi_\disc v^{(k+1)}\|_{\sn}^2
+\frac{1}{4}\dsp\sum_{n=0}^k \|\Pi_\disc v^{(n+1)}-\Pi_\disc v^{(n)}\|_{\sn}^2\\
&\quad+\delta t_\disc\dsp\sum_{n=0}^k\|\nabla_\disc v^{(n+1)}\|_{\tn}^2\\
&\leq \half \|\Pi_\disc v^{(0)}\|_{\sn}^2
+\dsp\sum_{n=0}^k \|g(\Pi_\disc v^{(n)})\|_\cH^2 \|\Delta^{(n+1)}B\|_\cH^2\\
&\quad+\dsp\sum_{n=0}^k \ld g(\Pi_\disc v^{(n)})\Delta^{(n+1)}B,\Pi_\disc v^{(n)} \rd\\
&\quad+\delta t_\disc\dsp\sum_{n=0}^k\Big( L\|\Pi_\disc v^{(n+1)}\|_{\tnn}^2+C_0\|\Pi_\disc v^{(n+1)}\|_{\tnn}\Big).
\end{aligned}
\end{equation}
After applying Young's inequality to the last term on the RHS, we have the following inequality
\begin{equation}\label{est-srm-7}
\begin{aligned}
&\half \|\Pi_\disc v^{(k+1)}\|_{\sn}^2
+\frac{1}{4}\dsp\sum_{n=0}^k \|\Pi_\disc v^{(n+1)}-\Pi_\disc v^{(n)}\|_{\sn}^2\\
&\qquad+\delta t_\disc\dsp\sum_{n=0}^k \|\nabla_\disc v^{(n+1)}\|_{\tn}^2\\
&\leq \half \|\Pi_\disc v^{(0)}\|_{\sn}^2
+\dsp\sum_{n=0}^k \|g(\Pi_\disc v^{(n)})\|_\cH^2 \|\Delta^{(n+1)}B\|_\cH^2\\
&\qquad+\dsp\sum_{n=0}^k \ld g(\Pi_\disc v^{(n)})\Delta^{(n+1)}B,\Pi_\disc v^{(n)} \rd\\
&\qquad+\delta t_\disc\dsp\sum_{n=0}^k\Big( \frac{2L+1}{2}\|\Pi_\disc v^{(n+1)}\|_{\tnn}^2+\frac{C_0^2}{2}\Big).
\end{aligned}
\end{equation}
It is useful to see that the expectation of $\Delta^{(n+1)}B$ is zero, which results in eliminating the third term on the RHS. Thus, taking the expectation implies to
\begin{equation}\label{est-srm-4}
\begin{aligned}
&\half \cE\Big[\|\Pi_\disc v^{(k+1)}\|_{\sn}^2
+\frac{1}{4}\dsp\sum_{n=0}^k \|\Pi_\disc v^{(n+1)}-\Pi_\disc v^{(n)}\|_{\sn}^2\Big]\\
&\qquad+\cE\Big[\dsp\sum_{n=0}^k \delta t_\disc \|\nabla_\disc v^{(n+1)}\|_{\tn}^2\Big]\\
&\leq \half \|\Pi_\disc v^{(0)}\|_{\sn}^2
+\dsp\sum_{n=0}^k \cE\Big[\|g(\Pi_\disc v^{(n)})\|_\cH^2 \|\Delta^{(n+1)}B\|_\cH^2\Big]\\
&\qquad+\cE\Big[\dsp\sum_{n=0}^k \delta t_\disc\Big( \frac{2L+1}{2}\|\Pi_\disc v^{(n+1)}\|_{\tnn}^2+\frac{C_0^2}{2}\Big)\Big].
\end{aligned}
\end{equation}
The assumption on $g$ implies that \cite[Equation 14]{J-SPDE-1} still holds. The second term on the RHS is thus expressed as
\begin{equation}\label{eq-Q1}
\cE[\|g(\Pi_\disc v^{(n)})\|_\cH^2 \|\Delta^{(n+1)}B\|_\cH^2]
\leq \delta t_\disc({\rm Tr}\cQ)(g_1\cE[\|\Pi_\disc v^{(n)}\|_{\sn}^2]+g_2).
\end{equation}
Together with \eqref{est-srm-4}, this implies
\begin{equation}\label{est-srm-5}
\begin{aligned}
&\cE\Big[\|\Pi_\disc v^{(k+1)}\|_{\sn}^2\Big]\\
&\leq \|\Pi_\disc v^{(0)}\|_{\sn}^2
+2(\tr\cQ)g_2T
+2(\tr\cQ)g_1\dsp\sum_{n=0}^k\delta t_\disc\cE\Big[\|\Pi_\disc v^{(n)}\|_{\sn}^2\Big]\\
&\qquad+\cE\Big[\dsp\sum_{n=0}^k\delta t_\disc(2L+1)\|\Pi_\disc v^{(n+1)}\|_{\tnn}^2+C_0^2\Big].
\end{aligned}
\end{equation}
The consistency property gives, in which $S_0$ is a positive constant depending on $v_0$
\[
\|\Pi_\disc v^{(0)}\|_{\sn} \leq S_0.
\]
Use this inequality and apply the discrete version of Gronwall’s lemma to \eqref{est-srm-5} to arrive at
\begin{equation}\label{est-srm-6-old}
\max_{1\leq n\leq N}\cE\Big[\|\Pi_\disc v^{(n)}\|_{\sn}^2 \Big] 
\leq C\cE\Big[\max_{1\leq n\leq N}\|\Pi_\disc v^{(n)}\|_{\sn}^2\Big].
\end{equation}
Relations \eqref{est-srm-4} and \eqref{est-srm-6-old} give
\[
\begin{aligned}
\cE\Big[\|\nabla_\disc v\|_{\tn}^2
&+\dsp\sum_{n=0}^{N-1}\|\Pi_\disc v^{(n+1)}-\Pi_\disc v^{(n)}\|_{\sn}^2
\Big]\\
&\leq C\cE\Big[\max_{1\leq n\leq N}\|\Pi_\disc v^{(n)}\|_{\sn}^2\Big].
\end{aligned}
\]
Take the supremum of \eqref{est-srm-7} on $k$ and the expectations to conclude
\begin{equation}\label{est-srm-8}
\begin{aligned}
&\cE\Big[\max_{1\leq n \leq N-1}\|\Pi_\disc v^{(n)}\|_{\sn}^2\Big]\\
&\leq \|\Pi_\disc v^{(0)}\|_{\sn}^2
+2\cE\Big[\dsp\sum_{n=0}^{N-1}\|g(\Pi_\disc v^{(n)})\|_\cH^2 \|\Delta^{(n+1)}B\|_\cH^2\Big]\\
&\qquad+2\cE\Big[\max_{0\leq k\leq N-1}\dsp\sum_{n=0}^k \ld g(\Pi_\disc v^{(n)})\Delta^{(n+1)}B,\Pi_\disc v^{(n)} \rd\Big]\\
&\qquad+\delta t_\disc\cE\Big[\max_{0\leq k\leq N-1}\dsp\sum_{n=0}^k\Big( (2L+1)\|\Pi_\disc v^{(n)}\|_{\tnn}^2+C_0^2\Big)\Big].
\end{aligned}
\end{equation}
Now, the third term on the RHS can be written as, thanks to the stochastic integral and the BurkH\"older–Davis–Gundy inequality: [11, Theorem 2.4]
\begin{equation}\label{est-srm-9}
\begin{aligned}
&\cE\Big[\max_{0\leq k\leq N-1}\dsp\sum_{n=0}^k \ld g(\Pi_\disc v^{(n)})\Delta^{(n+1)}B,\Pi_\disc v^{(n)} \rd\Big]\\
&\leq\frac{1}{4}\cE\Big[\max_{0\leq n\leq N}\|\Pi_\disc v^{(n)}\|_{\sn}^2\Big]
+C^2 g_1T \max_{0\leq n\leq N}\cE\Big[\|\Pi_\disc v^{(n)}\|_{\sn}^2\Big]\\
&\qquad+C^2g_2T.
\end{aligned}
\end{equation}
With \eqref{eq-Q1}, the second term on the RHS of \eqref{est-srm-8} is estimated as
\begin{equation}\label{est-srm-10}
\begin{aligned}
&\cE\Big[\dsp\sum_{n=0}^{N-1}\|g(\Pi_\disc v^{(n)})\|_\cH^2 \|\Delta^{(n+1)}B\|_\cH^2\Big]\\
&\leq ({\rm Tr}\cQ)g_1 T \max_{1\leq n \leq N}\cE\Big[\|\Pi_\disc v^{(n)}\|_{\sn}^2\Big]+(\tr\cQ)g_2T.
\end{aligned}
\end{equation}
Substituting estimates \eqref{est-srm-6-old}, \eqref{est-srm-9} and \eqref{est-srm-10} in inequality \eqref{est-srm-8} gives
\[
\cE\Big[\max_{1\leq n\leq N}\|\Pi_\disc v^{(n)}\|_{\sn}^2\Big]
\leq C.
\]
\end{proof}

\begin{lemma}\label{lemma-2}
Let $v$ be a solution to the approximate problem \eqref{gs-rm-weak}. Then, for some positive constant $C$, we have
\begin{equation}\label{eq-est-2}
\cE\Big[\max_{1\leq n\leq N}\|\Pi_\disc v^{(n)}\|_{\sn}^4 + \|\nabla v^{(n)}\|_{\tn}^4\Big]
\leq C.
\end{equation}
\end{lemma}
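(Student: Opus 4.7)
The plan is to adapt the two-stage argument of Lemma \ref{lemma-1} to the fourth moment. Starting from the test-function identity \eqref{est-srm-11} and summing on $n=0,\ldots,k$, as in the previous proof, one obtains a pointwise inequality of the form
\[
\half\|\Pi_\disc v^{(k+1)}\|_{\sn}^2 + \delta t\sum_{n=0}^{k}\|\nabla_\disc v^{(n+1)}\|_{\tn}^{2} \le A + B_k + M_k + C_k,
\]
with $A=\half\|\Pi_\disc v^{(0)}\|_{\sn}^{2}$, $B_k=\sum_{n=0}^{k}\|g(\Pi_\disc v^{(n)})\|_{\cH}^{2}\|\Delta^{(n+1)}B\|_{\cH}^{2}$, $M_k=\sum_{n=0}^{k}\ld g(\Pi_\disc v^{(n)})\Delta^{(n+1)}B,\Pi_\disc v^{(n)}\rd_{L^2}$ the discrete stochastic integral, and $C_k=\delta t\sum_{n=0}^{k}\bigl(\tfrac{2L+1}{2}\|\Pi_\disc v^{(n+1)}\|_{\sn}^{2}+\tfrac{C_0^2}{2}\bigr)$. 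Squaring produces the master inequality
\[
\|\Pi_\disc v^{(k+1)}\|_{\sn}^{4} + \Bigl(\delta t\sum_{n=0}^{k}\|\nabla_\disc v^{(n+1)}\|_{\tn}^{2}\Bigr)^{2} \le C\bigl(A^{2}+B_{k}^{2}+M_{k}^{2}+C_{k}^{2}\bigr),
\]
from which both parts of \eqref{eq-est-2} will be extracted.

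The argument then splits into two steps, each mirroring its counterpart in the proof of Lemma \ref{lemma-1}. In \textbf{Step 1}, I would take the expectation in the master inequality at fixed $k$. The discrete It{\^o} isometry yields $\cE[M_{k}^{2}]\le (\tr\cQ)\sum_{n=0}^{k}\delta t\,\cE[\|g(\Pi_\disc v^{(n)})\|_{\cH}^{2}\|\Pi_\disc v^{(n)}\|_{\sn}^{2}]$, which assumption \eqref{eq-g-funct} and Young's inequality turn into $C+C\sum\delta t\,\cE[\|\Pi_\disc v^{(n)}\|_{\sn}^{4}]$. Cauchy-Schwarz on time-sums likewise controls $\cE[C_{k}^{2}]$ by $C+C\sum\delta t\,\cE[\|\Pi_\disc v^{(n+1)}\|_{\sn}^{4}]$. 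Combined with the analogous bound on $\cE[B_{k}^{2}]$ discussed below, the discrete Gronwall lemma delivers $\max_n\cE[\|\Pi_\disc v^{(n)}\|_{\sn}^{4}]\le C$. In \textbf{Step 2}, I take the maximum on $k$ before the expectation; since $B_k$ and $C_k$ are non-decreasing, the only genuinely new ingredient is $\cE[\max_{k}M_{k}^{2}]$, which the Burkholder-Davis-Gundy inequality with $p=2$ bounds by $C\,\cE[\sum\delta t\|g(\Pi_\disc v^{(n)})\|_{\cH}^{2}\|\Pi_\disc v^{(n)}\|_{\sn}^{2}]$, a quantity already controlled by Step 1 and Lemma \ref{lemma-1}. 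This proves $\cE[\max_n\|\Pi_\disc v^{(n)}\|_{\sn}^{4}]\le C$, and keeping the gradient sum on the left-hand side at $k=N-1$ yields the gradient contribution in \eqref{eq-est-2}.

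The hardest ingredient is the estimate on $\cE[B_{k}^{2}]$, because $B_k$ is not a martingale and the naive Cauchy-Schwarz $(\sum a_{n})^{2}\le N\sum a_{n}^{2}$ introduces a factor $N=T/\delta t$ that would be ruinous as $\delta t\to 0$. The cure is the Gaussian fourth-moment bound $\cE[\|\Delta^{(n+1)}B\|_{\cH}^{4}]\le 3(\delta t\,\tr\cQ)^{2}$ for the $\cH$-valued $\cQ$-Brownian increments; its $\delta t^{2}$ scaling exactly cancels the $1/\delta t$ from Cauchy-Schwarz. Using the independence of $\Delta^{(n+1)}B$ from $\cF_{t^{(n)}}$ together with $\|g(\varphi)\|_{\cH}^{2}\le g_{1}\|\varphi\|_{\sn}^{2}+g_{2}$ then gives $\cE[B_{k}^{2}]\le C+C\sum\delta t\,\cE[\|\Pi_\disc v^{(n)}\|_{\sn}^{4}]$, of precisely the order required to close the Gronwall argument in Step 1 and, consequently, Step 2.
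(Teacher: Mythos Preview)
Your proposal is correct and follows a genuinely different route from the paper. The paper introduces the auxiliary quantity $V^{(n)}:=\|\Pi_\disc v^{(n)}\|_{\sn}^{2}+\tfrac{\delta t}{2}\|\nabla_\disc v^{(n)}\|^{2}+\sum_{i\le n}\delta t\|\nabla_\disc v^{(i)}\|^{2}$, rewrites the one-step energy identity \eqref{est-srm-11} as a bound on $V^{(n+1)}-V^{(n)}$, and then \emph{multiplies this increment by $V^{(n+1)}$}, invoking the polarisation identity $(a-b)a=\tfrac12(a^{2}-b^{2})+\tfrac12(a-b)^{2}$ to produce a telescoping relation for $(V^{(n+1)})^{2}-(V^{(n)})^{2}$. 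This forces a delicate decomposition of each noise term as $V^{(n+1)}(\cdot)=(V^{(n+1)}-V^{(n)})(\cdot)+V^{(n)}(\cdot)$, after which the $(V^{(n+1)}-V^{(n)})^{2}$ pieces are absorbed by Young's inequality; only then does one sum, apply Gronwall, and finish with BDG. Your approach instead sums \eqref{est-srm-11} first and \emph{squares the resulting global inequality}, reducing the task to estimating $\cE[B_{k}^{2}]$, $\cE[M_{k}^{2}]$, $\cE[C_{k}^{2}]$ term by term. The paper's route is the discrete analogue of applying It\^o's formula to $|v|^{4}$ and naturally retains the increment control $\sum(V^{(n+1)}-V^{(n)})^{2}$, which can be useful downstream; your route is more elementary, avoids the $V^{(n+1)}\mapsto V^{(n)}$ splitting altogether, and makes the role of the Gaussian fourth-moment scaling $\cE[\|\Delta^{(n+1)}B\|_{\cH}^{4}]\le C(\delta t)^{2}$ in neutralising the Cauchy--Schwarz loss on $B_{k}^{2}$ completely transparent. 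Both close via the same Step~1/Step~2 (Gronwall then BDG) structure.
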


\begin{proof}
Introduce the following notation
\[
V^{(n)}:=\|\Pi_\disc v^{(n)}\|_{\tnn}^2 + 
\frac{\delta t_\disc}{2}\|\nabla_\disc v^{(n)}\|_{\tn}^2
+A_k,
\]
where $A_K$ is defined by
\[
A_k:=\dsp\sum_{i=0}^k\delta t_\disc \|\nabla_\disc v^{(i)}\|_{\tn}^2.
\] 
We begin with deriving bound on the quantity $\max_n\cE\Big[(V^{(n)})^2\Big]$. Using the above formula, Equality \eqref{est-srm-11} becomes
\begin{equation*}
\begin{aligned}
&V^{(n+1)}-V^{(n)}+\|\Pi_\disc v^{(n+1)}-\Pi_\disc v^{(n)}\|_{\sn}^2
+\frac{\delta t_\disc}{2}\|\nabla_\disc v^{(n+1)}-\nabla_\disc v^{(n)}\|_{\tn}^2\\
&\leq 2\ld g(\Pi_\disc v^{(n)})\Delta^{(n+1)} B, \Pi_\disc(v^{(n+1)}-v^{(n)}) \rd
+\ld g(\Pi_\disc v^{(n)})\Delta^{(n+1)} B, \Pi_\disc v^{(n)} \rd\\
&\quad+\delta t_\disc\|\Pi_\disc v^{(n+1)}\|_{\sn}^2 \ld S(\Pi_\disc v^{(n+1)}),\Pi_\disc v^{(n+1)}\rd.
\end{aligned}
\end{equation*}
Multiplying the above inequality by $V^{(n+1)}$ yields
\begin{equation}\label{est-srm-2-1}
\begin{aligned}
&V^{(n+1)}\Big(V^{(n+1)}-V^{(n)}\Big)+V^{(n+1)}\|\Pi_\disc v^{(n+1)}-\Pi_\disc v^{(n)}\|_{\sn}^2\\
&\leq V^{(n+1)}\delta t_\disc \ld S(\Pi_\disc v^{(n+1)}),\Pi_\disc v^{(n+1)} \rd + \cT_1 + \cT_2,
\end{aligned}
\end{equation}
where
\begin{equation*}
\begin{aligned}
&\cT_1:=2V^{(n+1)}\ld g(\Pi_\disc v^{(n)})\Delta^{(n+1)} B, \Pi_\disc(v^{(n+1)}-v^{(n)}) \rd, \mbox{ and }\\
&\cT_2:=V^{(n+1)}
\ld g(\Pi_\disc v^{(n)})\Delta^{(n+1)} B, \Pi_\disc v^{(n)} \rd.
\end{aligned}
\end{equation*}
For the estimation of $\cT_1$ and $\cT_2$, we apply the Cauchy–Schwarz and the Young inequalities. This gives
\begin{equation}\label{est-T1}
\begin{aligned}
\cT_1 &\leq 2V^{(n+1)}\|g(\Pi_\disc v^{(n)})\|_\cH^2 \|\Delta^{(n+1)} B\|_\cH^2 \|\Pi_\disc v^{(n+1)}\|_{\sn}^2\\
&\leq 2V^{(n+1)}\|g(\Pi_\disc v^{(n)})\|_\cH^2 \|\Delta^{(n+1)} B\|_\cH^2\\
&\quad+\half V^{(n+1)}\|\Pi_\disc v^{(n+1)}-\Pi_\disc v^{(n)}\|_{\sn}^2\\
&\leq 2\Big(V^{(n+1)}-V^{(n)}\Big)\|g(\Pi_\disc v^{(n)})\|_\cH^2 \|\Delta^{(n+1)} B\|_\cH^2\\
&\quad+2V^{(n)}\|g(\Pi_\disc v^{(n)})\|_\cH^2 \|\Delta^{(n+1)} B\|_\cH^2\\
&\quad+\half V^{(n+1)}\|\Pi_\disc v^{(n+1)}-\Pi_\disc v^{(n)}\|_{\sn}^2\\
&\leq \frac{1}{8}(V^{(n+1)}-V^{(n)}\Big)^2 + 8\|g(\Pi_\disc v^{(n)})\|_\cH^4 \|\Delta^{(n+1)} B\|_\cH^4\\
&\quad+2V^{(n)}\|g(\Pi_\disc v^{(n)})\|_\cH^2 \|\Delta^{(n+1)} B\|_\cH^2\\
&\quad+\half V^{(n+1)}\|\Pi_\disc v^{(n+1)}-\Pi_\disc v^{(n)}\|_{\sn}^2,
\end{aligned}
\end{equation}
and
\begin{equation}\label{est-T2}
\begin{aligned}
\cT_2 &=\ld g(\Pi_\disc v^{(n)})\Delta^{(n+1)} B, \Pi_\disc v^{(n)} \rd\\
&=\Big(V^{(n+1)}-V^{(n)} \Big)\ld g(\Pi_\disc v^{(n)})\Delta^{(n+1)} B, \Pi_\disc v^{(n)} \rd\\
&\quad+V^{(n)}\ld g(\Pi_\disc v^{(n)})\Delta^{(n+1)} B, \Pi_\disc v^{(n)} \rd\\
&\leq \frac{1}{16}\Big(V^{(n+1)}-V^{(n)} \Big)^2
+4\|g(\Pi_\disc v^{(n)})\|_\cH^2 \|\Delta^{(n+1)} B\|_\cH^2 \|\Pi_\disc v^{(n)}\|_{\sn}^2\\
&\quad+V^{(n)}\ld g(\Pi_\disc v^{(n)})\Delta^{(n+1)} B, \Pi_\disc v^{(n)} \rd.
\end{aligned}
\end{equation}
We know that
\begin{equation}\label{relation-1}
\begin{aligned}
\|\Pi_\disc v^{(n)}\|_{\sn}^2 \leq V^{(n)}, \mbox{ and }
\|g(\Pi_\disc)v^{(n)}\|_\cH^2 \leq g_1 V^{(n)}+g_2.
\end{aligned}
\end{equation}
Using the moments' bounds \eqref{est-T1} and \eqref{est-T2}, we conclude from \eqref{est-srm-2-1} that, thanks to \eqref{relation-1}
\begin{equation}\label{est-srm-2-2}
\begin{aligned}
&\half \Big(V^{(n+1)}\Big)^2 - \half \Big(V^{(n)}\Big)^2
+\frac{5}{16}\Big(V^{(n+1)}-V^{(n)}\Big)^2\\
&\quad+\half \|\Pi_\disc v^{(n+1)}-\Pi_\disc v^{(n)}\|_{\sn}^2\\
&\leq 8\Big(g_1 V^{(n)}+g_2\Big)^2 \|\Delta^{(n+1)} B\|_\cH^4 
+10 V^{(n)}\Big(g_1 V^{(n)}+g_2\Big) \|\Delta^{(n+1)} B\|_\cH^2\\
&\quad+V^{(n)}\ld g(\Pi_\disc v^{(n)})\Delta^{(n+1)} B, \Pi_\disc v^{(n)} \rd\\
&\quad+V^{(n+1)}\delta t_\disc \ld S(\Pi_\disc v^{(n+1)}),\Pi_\disc v^{(n+1)} \rd.
\end{aligned}
\end{equation}
We use the Cauchy–Shwarz and the Young inequalities, the Lipschitz continuity condition, and \eqref{relation-1} to deal with the last term on the RHS. It follows that
\begin{equation}\label{est-srm-2-3}
\begin{aligned}
&\half \Big(V^{(n+1)}\Big)^2 - \half \Big(V^{(n)}\Big)^2
+\frac{5}{16}\Big(V^{(n+1)}-V^{(n)}\Big)^2\\
&\quad+\half \|\Pi_\disc v^{(n+1)}-\Pi_\disc v^{(n)}\|_{\sn}^2\\
&\leq 8\Big(g_1 V^{(n)}+g_2\Big)^2 \|\Delta^{(n+1)} B\|_\cH^4 
+10 V^{(n)}\Big(g_1 V^{(n)}+g_2\Big) \|\Delta^{(n+1)} B\|_\cH^2\\
&\quad+V^{(n)}\ld g(\Pi_\disc v^{(n)})\Delta^{(n+1)} B, \Pi_\disc v^{(n)} \rd\\
&\quad+V^{(n+1)}\delta t_\disc \Big( \frac{2L+1}{2}V^{(n+1)}+\frac{C_0^2}{2}\Big).
\end{aligned}
\end{equation}
The expectation of the third term on the RHS equals zero. With relation \eqref{relation-1}, the first two terms on the RHS can be estimated as
\begin{equation}\label{est-srm-2-4}
\begin{aligned}
&8\cE\Big[\Big(g_1V^{(n)}+g_2\Big)^2 \|\Delta^{(n+1)} B\|_\cH^4\Big] 
+10\cE\Big[V^{(n)}\Big(g_1V^{(n)}+g_2\Big) \|\Delta^{(n+1)} B\|_\cH^2\Big]\\
&\leq 8\delta t_\disc^2(\tr(\cQ))^2\cE\Big[(g_1V^{(n)}+g_2)^2\Big]
+10\delta t_\disc\tr(\cQ)\cE\Big[V^{(n)}(g_1V^{(n)}+g_2)].
\end{aligned}
\end{equation}
Using the above expression, the inequality \eqref{est-srm-1}, and the discrete version of Gronwall's lemma, summing \eqref{est-srm-2-3} over $n=0,...,k$, and taking the expectations imply to
\begin{equation}\label{est-srm-6}
\max_{1\leq n\leq N}\cE\Big[\Big(V^{(n)}\Big) \Big]
\leq C.
\end{equation}
We perform the formula \eqref{equality-1} to \eqref{est-srm-2-1} and sum over $n=0,...,k$ to conclude that, thanks to the estimates \eqref{est-T1} and \eqref{est-T2}
\[
\begin{aligned}
\half\Big(v^{(k+1)}\Big)^2
&\leq \half \Big(v^{(0)}\Big)^2 + 8\dsp\sum_{n=0}^k\|g(\Pi_\disc)v^{(n)}\|_\cH^4 \|\Pi_\disc v^{(n)}\|_{\sn}^4\\
&\quad+2\dsp\sum_{n=0}^k\|g(\Pi_\disc)v^{(n)}\|_\cH^2\|\Delta^{(n+1)} B\|_\cH^2\Big(V^{(n)}\Big)\\
&\quad+4\dsp\sum_{n=0}^k\|g(\Pi_\disc)v^{(n)}\|_\cH^2\|\Delta^{(n+1)} B\|_\cH^2
\|\Pi_\disc v^{(n)}\|_{\sn}^2\\
&\quad+\dsp\sum_{n=0}^k\ld g(\Pi_\disc v^{(n)})\Delta^{(n+1)} B, \Pi_\disc v^{(n)} \rd\Big(V^{(n)}\Big)\\
&\quad+\dsp\sum_{n=0}^k V^{(n+1)}\delta t_\disc \ld S(\Pi_\disc v^{(n+1)}),\Pi_\disc v^{(n+1)} \rd.
\end{aligned}
\] 
Hence, we take a maximum over $k$ and then take expectation to attain
\begin{equation}\label{est-srm-2-5}
\begin{aligned}
&\cE\Big[\max_{1\leq n\leq N-1}\Big(V^{(k+1)}\Big)^2\Big]\\
&\leq\Big(V^{(0)}\Big)^2 + 16\cE\Big[\dsp\sum_{n=0}^{N-1}\|g(\Pi_\disc)v^{(n)}\|_\cH^4 \|\Pi_\disc v^{(n)}\|_{\sn}^4\Big]\\
&\quad+4\cE\Big[\dsp\sum_{n=0}^{N-1}\|g(\Pi_\disc)v^{(n)}\|_\cH^2\|\Delta^{(n+1)} B\|_\cH^2\Big(V^{(n)}\Big)\Big]\\
&\quad+8\cE\Big[\dsp\sum_{n=0}^{N-1}\|g(\Pi_\disc)v^{(n)}\|_\cH^2\|\Delta^{(n+1)} B\|_\cH^2
\|\Pi_\disc v^{(n)}\|_{\sn}^2\Big]\\
&\quad+2\cE\Big[\max_{0\leq n\leq N-1}\dsp\sum_{n=0}^k\ld g(\Pi_\disc v^{(n)})\Delta^{(n+1)} B, \Pi_\disc v^{(n)} \rd\Big(V^{(n)}\Big)\Big]\\
&\quad+\cE\Big[\dsp\sum_{n=0}^{N-1} V^{(n+1)}\delta t_\disc \ld S(\Pi_\disc v^{(n+1)}),\Pi_\disc v^{(n+1)} \rd\Big],
\end{aligned}
\end{equation} 
Applying the BurkH\"older–Davis–Gundy inequality (with constant $C$), one has
\begin{equation}\label{est-srm-2-6}
\begin{aligned}
&2\cE\Big[\dsp\sum_{n=0}^{N-1}\ld g(\Pi_\disc v^{(n)})\Delta^{(n+1)} B, \Pi_\disc v^{(n)} \rd\Big(V^{(n)}\Big)\Big]\\
&\leq 2C\cE\Big[\Big(\dsp\sum_{n=0}^{N-1}\delta t_\disc \|g(\Pi_\disc)v^{(n)}\|_\cH^2 \|\Pi_\disc v^{(n)}\|_{\sn}^2 \Big(V^{(n)}\Big)^2\Big)^\half\Big]\\
&\leq 2C\cE\Big[\max_{0\leq n\leq N-1}V^{(n)}\Big(\dsp\sum_{n=0}^{N-1}\delta t_\disc (g_1 V^{(n)}+g_2)(V^{(n)})\Big)^\half\Big]\\
&\leq \half\cE\Big[\max_{0\leq n\leq N-1}(V^{(n)})^2\Big]
+4C^2\dsp\sum_{n=0}^{N-1}\delta t_\disc\cE\Big[g_1(V^{(n)})^2+g_2\Big]\\
&\leq \half\cE\Big[\max_{0\leq n\leq N-1}(V^{(n)})^2\Big]
+4C^2T\max_{0\leq n\leq N-1}\Big(\cE\Big[(V^{(n)})^2\Big]
+g_2\Big).
\end{aligned}
\end{equation} 
The last term on the RHS of \eqref{est-srm-2-5} can be introduced as
\begin{equation}\label{est-srm-2-7}
\begin{aligned}
&\cE\Big[\dsp\sum_{n=0}^{N-1} V^{(n)}\delta t_\disc \ld S(\Pi_\disc v^{(n+1)}),\Pi_\disc v^{(n+1)} \rd \Big]\\
&\leq \cE\Big[\dsp\sum_{n=0}^k \delta t_\disc V^{(n)}\|\Pi_\disc v^{(n)}\|_{\sn}\|S(\Pi_\disc v^{(n)})\|_{\sn} \Big]\\
&\leq \cE\Big[\max_{0\leq n\leq N-1}(V^{(n)})\dsp\sum_{n=0}^{N-1} \delta t_\disc\|\Pi_\disc v^{(n)}\|_{\sn}\|S(\Pi_\disc v^{(n)})\|_{\sn}
\Big]\\
&\leq\cE\Big[\max_{0\leq n\leq N-1}(V^{(n)})^2+\Big]
+C\max_{0\leq n\leq N-1}\cE\Big[(V^{(n)})^2\Big]+C
\Big].
\end{aligned}
\end{equation} 
The estimates \eqref{est-srm-2-3}, \eqref{est-srm-6}, \eqref{est-srm-2-5}, \eqref{est-srm-2-6}, and \eqref{est-srm-2-7} lead to
\[
\cE\Big[\max_{0\leq n\leq N-1}(V^{(n)})^2\Big]
\leq C,
\]
which completes the proof of the assertion.
\end{proof}

\begin{lemma}\label{lemma-3}
If $v$ is a solution the approximate problem \eqref{gs-rm-weak}, then, for some positive constant $C$, we have, for every $r \in \{1,...,N-1\}$
\begin{equation}
\cE\Big[\delta t_\disc\dsp\sum_{n=0}^{N-r}\|\Pi_\disc v^{(n+r)}-\Pi_\disc v^{(n)}\|_{\sn}^2\Big] \leq t^{(r)}C.
\end{equation}
\end{lemma}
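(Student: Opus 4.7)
The natural approach is to test the gradient scheme \eqref{gs-rm-weak} at each index $j \in \{n, n+1, \dots, n+r-1\}$ with the \emph{common} test function $\varphi = v^{(n+r)} - v^{(n)} \in X_{\disc,0}$ and sum over $j$. The discrete time-derivative column telescopes, producing the pivotal identity
\[
\|\Pi_\disc(v^{(n+r)} - v^{(n)})\|_{\sn}^2 = T_1(n) + T_2(n) + T_3(n),
\]
where $T_1(n) = -\delta t_\disc \sum_{j=n}^{n+r-1}\ld \nabla_\disc v^{(j+1)}, \nabla_\disc(v^{(n+r)} - v^{(n)})\rd_{L^2}$, $T_2(n) = \ld \sum_{j=n}^{n+r-1} g(\Pi_\disc v^{(j)})\Delta^{(j+1)}B,\, \Pi_\disc(v^{(n+r)} - v^{(n)})\rd_{L^2}$, and $T_3(n) = \delta t_\disc \sum_{j=n}^{n+r-1}\ld S(\Pi_\disc v^{(j)}),\, \Pi_\disc(v^{(n+r)} - v^{(n)})\rd_{L^2}$. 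The target quantity is obtained after taking expectation and applying $\delta t_\disc\sum_{n=0}^{N-r}$, so it suffices to bound each of the three contributions by $t^{(r)}\,C$.

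For the stochastic contribution, Cauchy--Schwarz followed by Young's inequality yields
\[
|T_2(n)| \leq \Bigl\|{\textstyle\sum_{j=n}^{n+r-1}} g(\Pi_\disc v^{(j)})\Delta^{(j+1)}B\Bigr\|_{\sn}^2 + \tfrac{1}{4}\|\Pi_\disc(v^{(n+r)} - v^{(n)})\|_{\sn}^2,
\]
the last term being absorbed into the left-hand side after summation. The remaining square-norm of the martingale increment sum is handled by a discrete It\^o isometry: the cross products with $i \ne j$ vanish in expectation because $g(\Pi_\disc v^{(j)})$ is $\cF_{t^{(j)}}$-measurable while $\Delta^{(j+1)}B$ is independent of $\cF_{t^{(j)}}$. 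Combining this with \eqref{eq-Q1} and Lemma \ref{lemma-1} gives the bound $r\,\delta t_\disc\,(\tr\cQ)(g_1 C + g_2)$, of order $t^{(r)}\,C$; a subsequent summation in $n$ with weight $\delta t_\disc$ contributes only a fixed factor $\delta t_\disc\sum_{n=0}^{N-r}\leq T$, absorbed into the constant.

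The deterministic terms $T_1(n)$ and $T_3(n)$ are controlled by Cauchy--Schwarz in the $j$-index (picking up $\sqrt{r}$), Cauchy--Schwarz in the $n$-index, and the counting identity $\sum_{n=0}^{N-r}\sum_{j=n}^{n+r-1}(\cdots)_{j} \leq r\sum_{j=0}^{N-1}(\cdots)_{j}$. For $T_1(n)$ this gives $\cE\bigl[\delta t_\disc\sum_n |T_1(n)|\bigr] \leq 2\,r\,\delta t_\disc\,\cE[\|\nabla_\disc v\|_{\tn}^2] = 2\,t^{(r)}\,C$ by Lemma \ref{lemma-1}; for $T_3(n)$ the Lipschitz bound $\|S(\Pi_\disc v^{(j)})\|_{\sn} \leq L\|\Pi_\disc v^{(j)}\|_{\sn} + C_0|\cD|^{1/2}$ together with the $L^2$-in-time bound on $\Pi_\disc v$ from Lemma \ref{lemma-1} yields an analogous estimate. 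The main obstacle is the stochastic term: a naive pathwise Cauchy--Schwarz on the martingale increment sum would yield only the weaker rate $\sqrt{t^{(r)}}$, so it is essential first to use Young's inequality to absorb the quadratic term into the left-hand side and only then invoke the discrete It\^o isometry in order to recover the correct linear scaling in $t^{(r)}$.
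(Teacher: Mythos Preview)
Your proposal is correct and follows essentially the same route as the paper: telescoping the scheme tested against $v^{(n+r)}-v^{(n)}$, splitting into gradient, stochastic, and reaction contributions, absorbing a quarter of the left-hand side via Young's inequality before applying the (discrete) It\^o isometry to the stochastic square, and handling the deterministic terms by Cauchy--Schwarz together with the overlap count $\sum_n\sum_{j=n}^{n+r-1}\leq r\sum_j$. The paper phrases the It\^o step via $\int_0^T\chi_{[t^{(n)},t^{(n+r)}]}g(\Pi_\disc v)\,\mathrm{d}B$ and the overlap argument via time integrals, but these are purely cosmetic differences.
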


\begin{proof}
For an arbirtry $i\in\{0,...,r\}$, we sum \eqref{gs-rm-weak} over $i$ to obtain
\begin{equation}\label{est-srm-3-1}
\begin{aligned}
\delta t_\disc\|\Pi_\disc v^{(n+r)}-\Pi_\disc v^{(n)}\|_{\sn}^2
=\cT_1 + \cT_2+\cT_3,
\end{aligned}
\end{equation}
where
\begin{equation}\label{est-srm-3-2}
\begin{aligned}
&\cT_1:=-\delta t_\disc^2\dsp\sum_{n=1}^{N-r}\dsp\sum_{i=0}^{r-1}\ld \nabla_\disc v^{(n+i+1)}, \nabla_\disc v^{(n+r)}-\nabla_\disc v^{(n)} \rd,\\
&\cT_2:=\delta t_\disc\dsp\sum_{n=1}^{N-r}\dsp\sum_{i=0}^{r-1}\ld g(\Pi_\disc v^{(n+i)})\Delta^{(n+i+1)} B, \Pi_\disc v^{(n+r)}-\Pi_\disc v^{(n)} \rd, \mbox{ and }\\
&\cT_3:=\delta t_\disc^2\dsp\sum_{n=1}^{N-r}\dsp\sum_{i=0}^{r-1}\ld S(\Pi_\disc v^{(n+i+1)}), \Pi_\disc v^{(n+r)}-\Pi_\disc v^{(n)} \rd.
\end{aligned}
\end{equation}
We make use of Cauchy--Schwarz inequalities to get a bound on the expectation of $\cT_1$ as follows 
\begin{equation}\label{est-srm-3-3}
\begin{aligned}
\cE[\cT_1]&\leq\cE
\Big[
\delta t_\disc\dsp\sum_{n=1}^{N-r} \|\nabla_\disc(v^{(n+r)}-v^{(n)})\|_{L^2(\cD)^d}\dsp\int_{t^{(n)}}^{t^{(n+r)}}\|\nabla_\disc v\|_{L^2(\cD)^d}
\Big]\\
&\leq \Big(t^{(r)}\Big)^{\frac{1}{2}}\cE\Big[
\Big(\delta t_\disc\dsp\sum_{n=1}^{N-r} \|\nabla_\disc(v^{(n+r)}-v^{(n)})\|_{L^2(\cD)^d}^2\Big)^{\frac{1}{2}}\\
&\quad\times
\Big(\dsp\sum_{n=1}^{N-r}\delta t_\disc\dsp\int_{t^{(n)}}^{t^{(n+r)}}\|\nabla_\disc v\|_{L^2(\cD)^d}^2\Big)^{\frac{1}{2}}
\Big]\\
&\leq\Big(t^{(r)}\Big)^{\frac{1}{2}}
\cE\Big[
\Big(2\dsp\int_{t^{(r)}}^{t^{(N)}}\|\nabla v\|_{L^2(\cD)^d}^2
+2\dsp\int_{t^{(r)}}^{t^{(N)}}\|\nabla v\|_{L^2(\cD)^d}^2\Big)^{\frac{1}{2}}\\
&\quad+\Big(r\delta t_\disc\dsp\int_{t^{(1)}}^{t^{(N)}}\|\nabla v\|_{\sn}^2\Big)^{\frac{1}{2}}
\Big]\\
&\leq 2\Big(t^{(r)}\Big)^{\frac{1}{2}}
\cE\Big[\|\nabla_\disc v\|_{\tn}^2\Big].
\end{aligned}
\end{equation}
We similarly use Cauchy--Schwarz inequalities to estimate the term $\cT_3$, thanks to Lipschitz continuity assumption 
\begin{equation}\label{est-srm-3-6}
\begin{aligned}
\cE[\cT_3]
&=\cE\Big[\delta t_\disc^2\dsp\sum_{n=1}^{N-r}\dsp\sum_{i=0}^{r-1}\ld S(\Pi_\disc v^{(n+i+1)}), \Pi_\disc v^{(n+r)}-\Pi_\disc v^{(n)} \rd\Big]\\
&\leq \cE\Big[\delta t_\disc^2\dsp\sum_{n=1}^{N-r}\|\Pi_\disc(v^{(n+r)}-v^{(n)})\|_{\sn}\dsp\sum_{i=0}^{r-1}\|S(\Pi_\disc v^{(n+i+1)})\|_{\sn}\Big]\\
&\leq \cE\Big[\delta t_\disc^2\dsp\sum_{n=1}^{N-r}\|\Pi_\disc(v^{(n+r)}-v^{(n)})\|_{\sn}\dsp\sum_{i=0}^{r-1}L\|\Pi_\disc v^{(n+i+1)}\|_{\sn}+C_0\Big]\\
&\leq \cE\Big[\delta t_\disc\dsp\sum_{n=1}^{N-r}\|\Pi_\disc(v^{(n+r)}-v^{(n)})\|_{\sn}dsp\sum_{i=0}^{r-1}\|S(\Pi_\disc v^{(n+i+1)})\|_{\sn}\Big]\\
&\leq \cE\Big[\delta t_\disc^2\dsp\sum_{n=1}^{N-r}\|\Pi_\disc(v^{(n+r)}-v^{(n)})\|_{\sn}\dsp\sum_{i=0}^{r-1}L\|\Pi_\disc v^{(n+i+1)}\|_{\sn}+C_0\Big]\\
&\leq\cE\Big[\delta t_\disc\dsp\sum_{n=1}^{N-r}\|\Pi_\disc(v^{(n+r)}-v^{(n)})\|_{\sn}\dsp\int_{t^{(n)}}^{t^{(n+r)}}L\|\Pi_\disc v\|_{\sn}+C_0
\Big]\\
&\leq \Big(t^{(r)}\Big)^{\frac{1}{2}}\cE\Big[\delta t_\disc\Big(\dsp\sum_{n=1}^{N-r}\|\Pi_\disc(v^{(n+r)}-v^{(n)})\|_{\sn}^2\Big)^{\frac{1}{2}}\\
&\quad\times
\Big(\dsp\sum_{n=1}^{N-r}\dsp\int_{t^{(n)}}^{t^{(n+r)}}L\|\Pi_\disc v\|_{\sn}^2\Big)^{\frac{1}{2}}+C_0
\Big]\\
&\leq \Big(t^{(r)}\Big)^{\frac{1}{2}}\Big(\delta t_\disc r\Big)^{\frac{1}{2}}\cE\Big[\|\Pi_\disc u\|_{\sn}^2\Big].
\end{aligned}
\end{equation}
For the estimation of $\cT_2$, we apply Young's inequality to attain
\begin{equation}\label{est-srm-3-4}
\begin{aligned}
&\cE[\cT_2]\\
&= \delta t_\disc\dsp\sum_{n=1}^{N-r}\cE\Big[
\ld\dsp\int_0^T \chi_{[t^{(n)},t^{(n+r)}]}(t)g(\Pi_\disc v(t))dB(t),
\Pi_\disc v^{(n+r)}-\Pi_\disc v^{(n)}\rd
\Big]\\
&\leq \frac{1}{4}\delta t_\disc\dsp\sum_{n=1}^{N-r}\cE\Big[\|\Pi_\disc v^{(n+r)}-\Pi_\disc v^{(n)}\|_{\sn}^2\Big]\\
&\quad+\delta t_\disc \dsp\sum_{n=1}^{N-r}\cE\Big[\|\dsp\int_0^T\chi_{[t^{(n)},t^{(n+r)}]}(t)g(\Pi_\disc v(t))dB(t)\|_{\sn}^2\Big].
\end{aligned}
\end{equation}
Because of the It\^o isometry, \eqref{eq-g-funct} and Lemma \ref{lemma-1}, the last term on the RHS is estimated as
\begin{equation}\label{est-srm-3-5}
\begin{aligned}
&\cE\Big[\|\dsp\int_0^T\chi_{[t^{(n)},t^{(n+r)}]}(t)g(\Pi_\disc v(t))dB(t)\|_{\sn}^2\Big]\\
&\leq(\tr\cQ)\cE\Big[
\dsp\int_0^T\chi_{[t^{(n)},t^{(n+r)}]}(t)\|g(\Pi_\disc v(t))\|_\cH^2\|
\Big]\\
&\leq(\tr\cQ)\cE\Big[
\dsp\int_0^T\chi_{[t^{(n)},t^{(n+r)}]}(t)(g_1\|\Pi_\disc v(t)\|_{\sn}^2+g_2)
\Big]\\
&\leq(\tr\cQ)t^{(r)}\cE\Big[g_1\max_{1\leq n\leq N}\|v^{(n)}\|_{\sn}^2+g_2\Big]\\
&\leq C\|v^{(0)}\|_{\sn}t^{(r)}.
\end{aligned}
\end{equation}
Together with \eqref{est-srm-3-3}, \eqref{est-srm-3-4}, \eqref{est-srm-3-5} and \eqref{est-srm-3-6} complete the proof.
\end{proof}

\begin{definition}\label{def-dual}
The dual norm $| \cdot |_{\star,\disc}$ is defined by: for all $v\in \Pi_\disc(X_{\disc,0})$,
\begin{equation}\label{eq-d-norm}
|v|_{\disc,\star}:=\dsp\sup\Big\{\dsp\int_\cD u(x)\Pi_\disc \psi(x) \ud x\;:\; \psi\in X_{\disc,0} \mbox{ and }\|\nabla_\disc \psi\|_{L^2(\cD)^d} = 1 \Big\}.
\end{equation}
\end{definition}

\begin{lemma}\label{lemma-4}
Let $v$ be a solution of \eqref{gs-rm-weak}. Then, for all $r=1,...,N-1$, there exists a positive constant $C$ satisfying
\begin{equation}\label{eq-d-est-1}
\cE\Big[|\Pi_\disc v^{(n+r)}-\Pi_\disc v^{(n)}|_{\disc,\star}^4\Big]
\leq C\left(t^{(r)}\right)^2.
\end{equation}
Also, for any $t_1,t_2\in [0,T]$, we have
\begin{equation}\label{eq-d-est-2}
\cE\Big[|\Pi_\disc v(t_1)-\Pi_\disc v(t_2)|_{\disc,\star}^4\Big]
\leq C(|t_1-t_2|+\delta t_\disc)^2.
\end{equation}
\end{lemma}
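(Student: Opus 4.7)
The plan is to test the scheme against an arbitrary $\psi\in X_{\disc,0}$ with $\|\nabla_\disc\psi\|_{L^2(\cD)^d}=1$, telescope \eqref{gs-rm-weak} over $r$ consecutive time steps, bound each resulting contribution by a quantity of the form ``random variable''$\,\times\,\|\nabla_\disc\psi\|_{L^2(\cD)^d}$, then take a supremum over $\psi$ to recover $|\cdot|_{\disc,\star}$, raise to the fourth power and take expectation. Estimate \eqref{eq-d-est-2} will then follow from \eqref{eq-d-est-1} via the piecewise-constant-in-time structure of $\Pi_\disc v$.

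Concretely, writing \eqref{gs-rm-weak} with $n$ replaced by $n+i$ and summing for $i=0,\dots,r-1$, the telescoping of the discrete time derivative gives
\[
\ld \Pi_\disc v^{(n+r)}-\Pi_\disc v^{(n)},\Pi_\disc\psi\rd=-\cT^{\mathrm{d}}+\cT^{\mathrm{s}}+\cT^{\mathrm{r}},
\]
with $\cT^{\mathrm{d}}$, $\cT^{\mathrm{s}}$, $\cT^{\mathrm{r}}$ gathering the diffusion, stochastic and reaction contributions. For $\cT^{\mathrm{d}}$, a Cauchy--Schwarz in the discrete time sum yields $|\cT^{\mathrm{d}}|\leq \|\nabla_\disc\psi\|_{L^2(\cD)^d}\,(t^{(r)})^{1/2}\|\nabla_\disc v\|_{\tn}$; after the fourth power, Lemma~\ref{lemma-2} controls $\cE[\|\nabla_\disc v\|_{\tn}^4]$ and produces a bound of order $C(t^{(r)})^2$. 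For $\cT^{\mathrm{r}}$, the discrete Poincar\'e inequality $\|\Pi_\disc\psi\|_{\sn}\leq C\|\nabla_\disc\psi\|_{L^2(\cD)^d}$ inherent to the gradient-discretisation framework, combined with the Lipschitz estimate $\|S(\Pi_\disc v)\|_{\sn}\leq L\|\Pi_\disc v\|_{\sn}+C_0$ and Cauchy--Schwarz in time, gives the same order after a second call to Lemma~\ref{lemma-2}.

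The main obstacle is the stochastic piece $\cT^{\mathrm{s}}=\sum_{i=0}^{r-1}\ld g(\Pi_\disc v^{(n+i)})\Delta^{(n+i+1)}B,\Pi_\disc\psi\rd$: since $\psi$ is selected pathwise to realise the supremum defining $|\cdot|_{\disc,\star}$, it depends on $\omega$, so one cannot place $\Pi_\disc\psi$ inside an It\^o integral and invoke the isometry directly. The remedy is to separate $\psi$ from the noise first, by Cauchy--Schwarz together with discrete Poincar\'e,
\[
|\cT^{\mathrm{s}}|\leq C\|\nabla_\disc\psi\|_{L^2(\cD)^d}\,\Big\|\sum_{i=0}^{r-1} g(\Pi_\disc v^{(n+i)})\Delta^{(n+i+1)}B\Big\|_{\sn},
\]
and then treat the remaining $\cF_{t^{(n+i)}}$-martingale by the Burkholder--Davis--Gundy inequality. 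Combining BDG with the conditional It\^o-isometry identity already used in \eqref{eq-Q1}, the growth assumption \eqref{eq-g-funct}, one more Cauchy--Schwarz in time on the quadratic variation, and Lemma~\ref{lemma-2} on $\cE[\max_n\|\Pi_\disc v^{(n)}\|_{\sn}^4]$, one obtains $\cE\Big[\Big\|\sum_i g(\Pi_\disc v^{(n+i)})\Delta^{(n+i+1)}B\Big\|_{\sn}^4\Big]\leq C(t^{(r)})^2$.

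Putting the three contributions together, dividing by $\|\nabla_\disc\psi\|_{L^2(\cD)^d}$, taking the supremum over admissible $\psi$ and then the fourth moment delivers \eqref{eq-d-est-1}. For \eqref{eq-d-est-2}, given $t_1\leq t_2$ in $[0,T]$ one locates indices $n_1\leq n_2$ with $t_i\in(t^{(n_i)},t^{(n_i+1)}]$; the piecewise-constant convention yields $\Pi_\disc v(t_i)=\Pi_\disc v^{(n_i+1)}$ and $t^{(n_2+1)}-t^{(n_1+1)}\leq|t_1-t_2|+\delta t_\disc$, so applying \eqref{eq-d-est-1} with $r=n_2-n_1$ closes the argument.
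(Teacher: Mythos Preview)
Your proposal is correct and follows essentially the same route as the paper: both sum the scheme over $r$ steps, split into diffusion, stochastic and reaction contributions, bound each against $\|\nabla_\disc\psi\|_{L^2(\cD)^d}$ (via Cauchy--Schwarz in time and the discrete Poincar\'e inequality), and then use Lemma~\ref{lemma-2} together with the Burkholder--Davis--Gundy inequality on the $L^2(\cD)$-norm of the stochastic increment before taking the supremum over $\psi$. Your explicit remark that the supremum is realised by a $\psi$ depending on $\omega$, forcing one to separate $\psi$ from the noise \emph{before} invoking BDG, makes transparent a point the paper leaves implicit; the derivation of \eqref{eq-d-est-2} from \eqref{eq-d-est-1} via the piecewise-constant structure is likewise the paper's argument.
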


\begin{proof}
Taking an arbitrary $w$, such that $w\in X_{\disc,0}, \|\nabla_\disc w\|_{L^2(\cD)^d} = 1$, the relation \eqref{est-srm-3-1} can be rewritten as
\begin{equation}\label{est-d-norm-1}
\begin{aligned}
\cE\Big[|\Pi_\disc v^{(n+r)}-\Pi_\disc v^{(n)}|_{\disc,\star}^4\Big]
&=\cE\Big[|\dsp\sup_{w}\ld \Pi_\disc v^{(n+r)}-\Pi_\disc v^{(n)},\Pi_\disc w \rd|^4\Big]\\
&\leq \cT_1 + \cT_2 + \cT_3,
\end{aligned}
\end{equation}
where
\begin{equation*}
\begin{aligned}
&\cT_1:=2^3(\delta t_\disc)^4\cE\Big[\Big(\dsp\sup_{w}\dsp\sum_{i=0}^{r-1}\ld\nabla_\disc v^{(n+i+1)},\nabla_\disc w\rd\Big)^4\Big], \\
&\cT_2:=2^3\cE\Big[\Big(\dsp\sup_{w}\dsp\sum_{i=0}^{r-1}\ld g(\Pi_\disc v^{(n+i)})\Delta^{(n+i+1)}B,\Pi_\disc w\rd\Big)^4\Big],\mbox{ and }\\
&\cT_3:=2^3(\delta t_\disc)^4\cE\Big[\Big(\dsp\sup_{w}\dsp\sum_{i=0}^{r-1}\ld S(\Pi_\disc v^{(n+i+1)}),\Pi_\disc w\rd\Big)^4\Big]
\end{aligned}
\end{equation*}
To get a bound on $\cT_1$, we apply Lemma \ref{lemma-2} and the H\"older inequality
\begin{equation}\label{est-d-norm-2}
\begin{aligned}
\cT_1 &\leq C (\delta t_\disc)^4 \cE\Big[ \Big( \dsp\sum_{i=0}^{r-1}\|\nabla_\disc v^{(n+i+1)}\|_{L^2(\cD)^d}\sup_{w}\|\nabla_\disc w\|_{L^2(\cD)^d} \Big)^4 \Big]\\
&\leq C \cE\Big[ \Big( \dsp\int_{t^{(n)}}^{t^{(n+r)}}\|\nabla_\disc v\|_{L^2(\cD)^d}\Big)^4 \Big]\\
&\leq C(t^{(r)}) \cE\Big[ \Big( \dsp\int_{t^{(n)}}^{t^{(n+r)}}\|\nabla_\disc v\|_{L^2(\cD)^d}^2\Big)^2 \Big]\\
&\leq C(t^{(r)})\cE\Big[ \Big\|\nabla_\disc v\|_{\tn}^2 \Big] \leq c(t^{(r)}).
\end{aligned}
\end{equation}
For the estimation of $\cT_2$, we employ the BurkH\"older--Davis--Gundy inequality, \eqref{eq-g-funct} and \eqref{lemma-1}
\begin{equation}\label{est-d-norm-3}
\begin{aligned}
\cT_2 &\leq \cE\Big[\|\dsp\int_0^T \chi_{[t^{(n)},t^{(n+r)}]}(t)g(\Pi_\disc v(t))d B(t)\|_{\sn}^4 \Big]\\
&\leq \cE\Big[\Big(\dsp\int_0^T \chi_{[t^{(n)},t^{(n+r)}]}(t) (g_1\|\Pi_\disc v(t)\|_{\sn}^2+g_2)\Big)^2 \Big]\\
&\leq C(t^{(r)})^2\cE\Big[g_1\max_{1\leq n\leq N}\|\Pi_\disc u^{(n)}\|_{\sn}^4+g_2\Big]
\leq Ct^{(r)}.
\end{aligned}
\end{equation}
We make use the Lipschitz continuity condition and Lemma \eqref{lemma-1} to estimate $\cT_3$
\begin{equation}\label{est-d-norm-4}
\begin{aligned}
\cT_3 &\leq C(t^{(r)})^2\cE\Big[LC_0\sup_{w}\|\Pi_\disc w\|_{\sn}^2\Big]\\
&\quad+C\delta t_\disc \cE\Big[L\sup_{w}\|\Pi_\disc w\|_{\sn}^2\Big( \dsp\sum_{i=0}^{r-1}\|\Pi_\disc v^{(n+i+1)}\|_{\sn}^2 \Big) \Big]\\
&\leq C(t^{(r)})^2 + C(t^{(r)})\cE\Big[ \Big( \dsp\int_{t^{(n)}}^{t^{(n+r)}} \|\Pi_\disc v(t)\|_{\sn}^2 \Big)^4 \Big]\\
&\leq C(t^{(r)})^2 + C(t^{(r)}) \cE\Big[\|\Pi_\disc v\|_{\tnn}^2\Big]\\
&\leq C(t^{(r)})^2 + C(t^{(r)}) \Big(\cE\Big[\|\Pi_\disc v\|_{\tnn}^4\Big]\Big)^2 \leq C(t^{(r)}).
\end{aligned}
\end{equation}
Altogether \eqref{est-d-norm-1}, \eqref{est-d-norm-2}, \eqref{est-d-norm-3} and \eqref{est-d-norm-4} yield the estimate \eqref{eq-d-est-1}. The estimate \eqref{eq-d-est-2} can be obtained by using the fact that when $t_1,t_2\in [0,T]$, $t_1 < t_2$, $n\leq r$, and $t_1\in(t^{(2)},t^{(3)}]$, then $t^{(2-n)}\leq |t_2-t_1|+\delta t_\disc$.
\end{proof}

Let $v$ be a solution to the approximate problem \eqref{gs-rm-weak}. For every $0\leq t\leq T$, we have $n \in \{0,....,N-1\}$ such that $t^{(n)} \leq t \leq  t^{(n+1)}$. We introduce
\begin{equation}
G_\disc(t):=G_\disc^{(n)}:=\dsp\sum_{i=0}^ng(\Pi_\disc v^{(n)})\Delta^{(i+1)}B.
\end{equation} 
As proved in \cite{J-SPDE-1}, we can establish an estimate on $G_\disc$.

\begin{lemma}\label{lemma-5}
For any $\beta \in (0,\frac{1}{2})$, the following inequalities hold 
\begin{equation}\label{eq-est-MD}
\cE\Big[\|G_\disc\|_{H^\infty(0,T,\sn)}^4\Big] \leq C 
\mbox{ and } \cE\Big[\|G_\disc\|_{H^{\beta,4}(0,T,\sn)}^2\Big] \leq C,
\end{equation}
where $C$ is a positive constant depending on $\beta$.
\end{lemma}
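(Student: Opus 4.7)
The proof plan follows the template suggested by the reference to \cite{J-SPDE-1}. Both bounds exploit the fact that $G_\disc$ is a piecewise-constant-in-time realisation of a discrete It\^o integral, so the Burkholder–Davis–Gundy (BDG) inequality is the natural tool, and the moment bounds on $\Pi_\disc v^{(n)}$ already proved in Lemma \ref{lemma-1} and Lemma \ref{lemma-2} provide the inputs.

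For the first bound, I would observe that $\{G_\disc^{(n)}\}_{n}$ is a discrete $(\cF_{t^{(n)}})$-martingale with square function controlled, via $\cE[\|\Delta^{(n+1)}B\|_\cH^2\mid \cF_{t^{(n)}}]\le \delta t_\disc\tr(\cQ)$ and the It\^o isometry, by $\sum_n \delta t_\disc \tr(\cQ)\|g(\Pi_\disc v^{(n)})\|_{\cL(\cH,\sn)}^2$. Applying BDG at order $4$ gives
$$\cE\Bigl[\max_{0\le n\le N}\|G_\disc^{(n)}\|_{\sn}^4\Bigr]\le C\,\cE\Bigl[\Bigl(\sum_{n=0}^{N-1}\delta t_\disc\tr(\cQ)\|g(\Pi_\disc v^{(n)})\|_{\cL(\cH,\sn)}^2\Bigr)^2\Bigr].$$
Using the growth assumption \eqref{eq-g-funct} to convert the right-hand side into a bound involving $\max_n\|\Pi_\disc v^{(n)}\|_{\sn}^4$, and then invoking Lemma \ref{lemma-2}, yields the required constant $C$ independent of $\disc$.

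For the fractional Sobolev bound, I would use Jensen to reduce $\cE[\|G_\disc\|_{H^{\beta,4}(0,T;\sn)}^2]$ to $\bigl(\cE[\|G_\disc\|_{H^{\beta,4}}^4]\bigr)^{1/2}$ and expand the Slobodeckij seminorm
$$\|G_\disc\|_{H^{\beta,4}}^4=\|G_\disc\|_{L^4(0,T;\sn)}^4+\int_0^T\!\!\int_0^T\frac{\|G_\disc(t)-G_\disc(s)\|_{\sn}^4}{|t-s|^{1+4\beta}}\,dt\,ds.$$
The $L^4$-in-time piece is controlled by the first bound. For the double integral, I would apply Fubini and use BDG on the increment $G_\disc(t)-G_\disc(s)$, which is either zero (when $s,t$ lie in the same time step) or a stochastic integral of $g(\Pi_\disc v)$ over a union of subintervals of total length at most $|t-s|+\delta t_\disc$. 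Combined with \eqref{eq-g-funct} and the $L^4$-moment bound from Lemma \ref{lemma-2}, BDG yields
$$\cE\bigl[\|G_\disc(t)-G_\disc(s)\|_{\sn}^4\bigr]\le C\bigl(|t-s|+\delta t_\disc\bigr)^2,$$
mirroring the dual-norm estimate of Lemma \ref{lemma-4}. Substituting this into the seminorm and computing the resulting double integral $\int\!\int (|t-s|+\delta t_\disc)^2|t-s|^{-(1+4\beta)}\,dt\,ds$ gives a finite constant precisely when $4\beta-1<1$, i.e.\ $\beta<1/2$, which is exactly the range claimed.

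The main obstacle, as in \cite{J-SPDE-1}, is the bookkeeping for the piecewise-constant-in-time function $G_\disc$: obtaining the parabolic $|t-s|^2$ scaling of the fourth moment of the increment, uniformly across pairs $s,t$ that may straddle one or more time steps, while maintaining dependence only on $|t-s|+\delta t_\disc$ (so that one does not lose in the limit $\delta t_\disc\to 0$). Once this increment estimate is available, the critical Hölder exponent $\beta<1/2$ emerges mechanically from the balance between the $|t-s|^2$ stochastic regularity and the singular Slobodeckij weight.
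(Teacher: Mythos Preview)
Your plan for the first inequality is exactly the paper's: apply BDG at order~$4$ to the discrete martingale, use the growth bound \eqref{eq-g-funct}, and close with Lemma~\ref{lemma-2}. For the second inequality your strategy also coincides with the paper's, which establishes the discrete increment estimate $\cE\bigl[\|G_\disc^{(n+r)}-G_\disc^{(n)}\|_{\sn}^4\bigr]\le C\,(t^{(r)})^2$ and then invokes \cite[Lemma~7.2]{J-SPDE-1}.

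There is, however, a real gap in the step you describe as ``mechanical''. The double integral you write down, $\int_0^T\!\int_0^T(|t-s|+\delta t_\disc)^2|t-s|^{-(1+4\beta)}\,dt\,ds$, is \emph{divergent} for every $\beta>0$: near the diagonal the numerator does not vanish (it tends to $\delta t_\disc^2$) while the weight $|t-s|^{-(1+4\beta)}$ is non-integrable. Using that $G_\disc(t)=G_\disc(s)$ when $s,t$ lie in the same subinterval removes part of the diagonal strip, but the contribution from \emph{adjacent} cells still produces $\int_0^{\delta t_\disc} u\cdot u^{-(1+4\beta)}\,du$, which is finite only for $\beta<\tfrac14$. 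This is not an artefact of the estimate: a genuinely piecewise-constant function with nontrivial jumps belongs to $W^{\beta,4}(0,T)$ only for $\beta<\tfrac14$. The way \cite{J-SPDE-1} avoids this is to work with the \emph{continuous} It\^o integral $\widetilde G_\disc(t)=\int_0^t g(\Pi_\disc v(s))\,\mathrm dB(s)$ (with the piecewise-constant integrand), for which BDG gives the clean bound $\cE\bigl[\|\widetilde G_\disc(t)-\widetilde G_\disc(s)\|_{\sn}^4\bigr]\le C\,|t-s|^2$ with no $\delta t_\disc$ correction; then your Slobodeckij computation does yield convergence exactly for $\beta<\tfrac12$. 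So either replace $G_\disc$ by its continuous-time counterpart before estimating the fractional norm, or accept the restricted range $\beta<\tfrac14$; the bound $(|t-s|+\delta t_\disc)^2$ alone cannot deliver $\beta<\tfrac12$.
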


\begin{proof}
The BurkH\"older–Davis--Gundy gives
\[
\begin{aligned}
\cE\Big[ \|G_\disc\|_{H^\infty(0,T,\sn)}^4\Big]&=\cE\Big[\max_{1\leq n\leq N}\Big\|\dsp\sum_{i=0}^n g(\Pi_\disc v^{(i)})\Delta^{(i+1)}B\Big\|_\sn^4 \Big]\\
&\leq \cE\Big[\Big(\dsp\int_o^T \|g(\Pi_\disc v)\|_{L(K,\sn)}^2\Big)^2 \Big].
\end{aligned}
\]
\end{proof}
Hence, the first estimate is a consequence of \eqref{eq-g-funct} and Lemma \ref{eq-est-2}. Similar to \eqref{est-d-norm-4}, we have
\begin{equation}\label{eq-dif-MD}
\cE\Big[\|G_\disc^{(n+r)}-G_\disc^{(n)}\|_{\sn}^4\Big]
\leq C(t^{(r)})^2.
\end{equation}
Reasoning as in \cite[Lemma 7.2]{J-SPDE-1}, the second estimate follows from the above inequality and the first estimate.

\section{Main Results}\label{sec-theorem}
In \cite[Section 4]{J-SPDE-1}, it is proved that the sequence $(\Pi_{\disc_m}v_m, \nabla_{\disc_m}v_m,G_{\disc_m},B)_{m\in\NN}$ is tight. Now, using our estimates established in the previous section, we can show that the sequence $(\Pi_{\disc_m}v_m, \nabla_{\disc_m}v_m,G_{\disc_m},B)_{m\in\NN}$ converges almost sure up to a change of probability space. The proof of the following lemma is identical to \cite[Lemma 4.2]{J-SPDE-1} ((slightly adjusted to the fact that $\bar v$ is a solution to the model considered here).

\begin{lemma}\label{lemma-6}
We have a probability space $( \overline\O, \overline\cF,\overline{\mathbb F},\overline{\mathbb P})$, $(\widehat v_m, \overline{Z}_m,\overline{B}_m)_{m\in\NN}$, and $(\bar v,\overline Z,\overline B)$, in which the following items are fulfilled:
\begin{itemize}
\item $\widehat v_m \in X_{\disc_m,0},\; \forall m\in\NN$,
\item the laws of $(\Pi_{\disc_m}\widehat v_m,\nabla_{\disc_m}\widehat v_m),\overline{Z}_m,\overline{B}_m)$ and $(\Pi_{\disc_m}v_m,\nabla_{\disc_m}v_m),Z_m,{B}_m)$ are the same, $\forall m\in\NN$
\item $(\bar v,\overline Z,\overline B)$ has values in $L^2(0,T;H_0^1(\cD))\times L^2(0,T;L_{\rm w}^2(\cD))\times C([0,T];L^2(\cD))$,
\item up to a subsequence, the following convergences hold $\mathbb P$-almost surely,
\begin{equation}\label{eq-conv-1}
\Pi_{\disc_m}\widehat v_m \to \bar v \mbox{ strongly in } L^2(0,T;L^2(\cD)),
\end{equation}
\begin{equation}\label{eq-conv-2}
\nabla_{\disc_m}\widehat v_m \to \nabla\bar v \mbox{ weakly in } L^2(0,T;L^2(\cD)^d),
\end{equation}
\begin{equation}\label{eq-conv-3}
\overline{Z}_m \to \overline Z \mbox{ strongly in } L^2(0,T;L_{\rm w}^2(\cD)),
\end{equation}
\begin{equation}\label{eq-conv-4}
\overline{B}_m \to \overline B \mbox{ strongly in } C([0,T];L^2(\cD)),
\end{equation}
\item $\widehat{v}_m$ satisfies the approximate problem \eqref{gs-rm-weak}, where $B=\overline{B}_m$, and the following convergences up to a subsequence holds for almost all $0<r,s<T$, 
\begin{equation}\label{eq-conv-5}
\dsp\lim_{m\to\infty}\|(\Pi_{\disc_m}\widehat v_m(r)-\Pi_{\disc_m}\widehat v_m(s))-(\bar v(r)-\bar v(s))\|_{L^2}=0,
\end{equation}
\begin{equation}\label{eq-conv-6}
\dsp\lim_{m\to\infty}\|(\overline{Z}_m(s) - \overline Z_m(r))-(\overline{Z}(r)- \overline{Z}(s))\|_{L^2}=0.
\end{equation}
\end{itemize}
\end{lemma}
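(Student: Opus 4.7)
The plan is to carry out a Skorokhod-type representation argument, starting from the tightness of $(\Pi_{\disc_m}v_m,\nabla_{\disc_m}v_m,G_{\disc_m},B)_{m\in\NN}$ already recorded from \cite{J-SPDE-1}. Because the product space mixes a strong $L^2$ topology, a weak topology on $L^2(\cD)$ (which is not Polish), and the Banach space $C([0,T];L^2(\cD))$, the classical Skorokhod theorem does not apply directly: I would invoke the Jakubowski--Skorokhod representation theorem, which holds on topological spaces admitting a countable family of continuous real-valued functions separating points. One checks this hypothesis on $L^2(0,T;L^2_{\rm w}(\cD))$ by testing against a dense countable set of test functions in $L^2(0,T;L^2(\cD))$. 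Applying Jakubowski's theorem along a suitable subsequence produces the new probability space $(\overline\O,\overline\cF,\overline{\mathbb P})$ and random variables $(\widehat v_m,\overline Z_m,\overline B_m)$ with the prescribed law identity and almost sure convergences \eqref{eq-conv-1}--\eqref{eq-conv-4}.

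Once equality in law is secured, the strong convergence of $\Pi_{\disc_m}\widehat v_m$ in $L^2(0,T;L^2(\cD))$ is obtained by combining the compactness property of the gradient discretisation with the uniform time-translation estimate of Lemma \ref{lemma-3}, via a discrete Kolmogorov--Riesz argument; the weak convergence of the reconstructed gradients comes from the uniform bound of Lemma \ref{lemma-1}. The filtration $\overline{\mathbb F}$ is then defined as the usual augmentation of the natural filtration generated jointly by $(\widehat v_m,\overline Z_m,\overline B_m)$ and $(\bar v,\overline Z,\overline B)$. Because equality in law is preserved for the pair $(v_m,B)$ together with its quadratic variation, $\overline B_m$ and $\overline B$ remain $\cQ$-Brownian motions, each adapted to $\overline{\mathbb F}$.

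The next task is to verify that $\widehat v_m$ still solves \eqref{gs-rm-weak} with Brownian motion $\overline B_m$. I would write both sides of the scheme as a Borel-measurable functional of the joint process: the deterministic gradient-scheme relation holds $\mathbb P$-almost surely on the original space, so the very same measurable relation holds $\overline{\mathbb P}$-almost surely for the versions with identical joint law. Finally, the two ``double-increment'' convergences \eqref{eq-conv-5}--\eqref{eq-conv-6} follow from Vitali's convergence theorem: the a.s.\ pointwise (in $\overline\omega$) convergences \eqref{eq-conv-1}, \eqref{eq-conv-3} hold in $L^2(0,T;L^2(\cD))$ and $L^2(0,T;L^2_{\rm w}(\cD))$ respectively, while the fourth-moment bounds supplied by Lemma \ref{lemma-2} (and the dual-norm estimate of Lemma \ref{lemma-4} for sharpness in time) give uniform integrability; one then extracts a subsequence so that the convergence is valid for a.e.\ pair $(r,s)\in(0,T)^2$ by Fubini.

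The main obstacle is the non-Polish weak-topology factor, which forces the use of the Jakubowski version and requires a careful check of the separating-family condition, together with a consistent definition of the augmented filtration so that adaptedness of the new Brownian motions and the discrete solutions is preserved. A secondary delicate point is pinning down that the stochastic integral against $\overline B_m$ coincides with the natural reconstruction of $G_{\disc_m}$ on the new probability space, which is exactly what legitimises the identity \eqref{gs-rm-weak} after the change of probability space.
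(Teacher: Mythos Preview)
The paper does not actually supply a proof of this lemma: immediately before the statement it says ``The proof of the following lemma is identical to \cite[Lemma 4.2]{J-SPDE-1} (slightly adjusted to the fact that $\bar v$ is a solution to the model considered here),'' and nothing further is written. So there is no in-paper argument to compare your proposal against.

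That said, your outline is precisely the standard route such results take and is consistent with what one expects the cited reference to contain: tightness from the a priori estimates (Lemmas \ref{lemma-1}--\ref{lemma-5}), the Jakubowski--Skorokhod representation to handle the weak-$L^2$ factor, transfer of the scheme identity by equality in law, and uniform integrability from the fourth-moment bounds to upgrade almost-sure convergence to the increment convergences \eqref{eq-conv-5}--\eqref{eq-conv-6}. Your identification of the two genuinely delicate points (the non-Polish factor forcing Jakubowski's version, and the need to check that the stochastic integral against $\overline{B}_m$ reproduces the discrete martingale term on the new space) is accurate and would be the substance of a complete write-up.
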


\begin{theorem}\label{theorem-1}
Assume that Hypotheses \ref{hyp-1} hold. Let $(\disc_m)_{m\in\NN}$ be a sequence of gradient discretisations satisfying the consistency, limit--conformity and compactness properties. Then, for any $m\geq 1$, the approximate scheme \eqref{gs-rm-weak} (in which $\disc:=\disc_m$) has a random process $v_m$ solution. 

Also, we have a weak martingale solution $(\widetilde\O,\widetilde\cF,(\widetilde\cF_t)_{t\in[0,T]},\widetilde{\mathbb P},\widetilde B,\widetilde v)$ satisfying \eqref{rm-weak}, and a sequence $\{\widetilde v_m\}$ of random processes defined on $\widetilde\O$, which is of the same law as $v_m$, such that up to a subsequence, the following convergences are satisfied $\widetilde{\mathbb P}$-almost surely:
\begin{equation*}
\begin{aligned}
&\Pi_{\disc_m}\widetilde v_m \to \widetilde v, \quad \mbox{strongly in } L^2(\cD\times(0,T)),\\
&\nabla_{\disc_m}\widetilde v_m \to \nabla\widetilde v, \quad \mbox{weakly in } L^2(\cD\times(0,T))^d.
\end{aligned}
\end{equation*} 
\end{theorem}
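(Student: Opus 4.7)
The plan is to follow a compactness--Skorokhod strategy in the spirit of \cite{J-SPDE-1}, adapted to our reaction--diffusion setting, with the argument unfolding in four stages. First, for fixed $m$ and each $n$, observe that the scheme \eqref{gs-rm-weak} is a linear system for $v^{(n+1)} \in X_{\disc_m,0}$, since the driving terms $g(\Pi_\disc v^{(n)})\Delta^{(n+1)}B$ and $S(\Pi_\disc v^{(n)})$ depend only on previously computed data. I would check that the bilinear form $(u,\varphi) \mapsto \langle \Pi_\disc u, \Pi_\disc \varphi\rangle_{L^2} + (\delta t_\disc)^2 \langle \nabla_\disc u, \nabla_\disc \varphi\rangle_{L^2}$ is coercive on $X_{\disc_m,0}$ thanks to the norm property \eqref{norm-disc}, so that Lax--Milgram delivers a unique, progressively measurable solution. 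Iteration over $n$ then yields the discrete random process $v_m$.

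Next, the a priori estimates of Lemmas \ref{lemma-1}--\ref{lemma-5}, together with the compactness property of $(\disc_m)_{m\in\NN}$, yield tightness of the joint laws of $(\Pi_{\disc_m}v_m, \nabla_{\disc_m}v_m, G_{\disc_m}, B)$ in a suitable product space (this is exactly the tightness already recalled before Lemma \ref{lemma-6}). Lemma \ref{lemma-6} then plays the role of a Skorokhod/Jakubowski representation, producing a new probability space $(\overline\O, \overline\cF, \overline{\mathbb P})$, law-preserving copies $(\widehat v_m, \overline Z_m, \overline B_m)$, and almost sure limits $(\bar v, \overline Z, \overline B)$ together with the convergences \eqref{eq-conv-1}--\eqref{eq-conv-6}.

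In the third stage I would pass to the limit in the deterministic part of the scheme. Fix $\varphi \in H_0^1(\cD)$, test \eqref{gs-rm-weak} against $J_{\disc_m}\varphi$, and sum up to $n$ with $t^{(n)} \approx t$. By consistency, $\Pi_{\disc_m}J_{\disc_m}\varphi \to \varphi$ and $\nabla_{\disc_m}J_{\disc_m}\varphi \to \nabla\varphi$ strongly in $L^2$; combined with \eqref{eq-conv-2} this lets the diffusion term converge to $\int_0^t \langle \nabla\bar v, \nabla\varphi\rangle\, ds$. The reaction term is handled by the Lipschitz continuity of $S$ together with the strong convergence \eqref{eq-conv-1}. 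The initial contribution converges to $\langle v_0, \varphi\rangle$ by consistency applied to $J_{\disc_m}v_0$, and the discrete time-derivative term is matched against $\langle \bar v(t) - v_0, \varphi\rangle$ through the weak $L^2$ convergence at almost every fixed $t$, which in turn follows from the dual-norm H\"older estimate of Lemma \ref{lemma-4}.

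The main obstacle will be identifying $\overline Z(t) = \int_0^t g(\bar v)(s)\, d\overline B(s)$ on the new probability space, since $G_{\disc_m}$ is a discrete Riemann-type sum rather than a stochastic integral against $\overline B_m$, and after Skorokhod the It\^o structure has to be reconstructed. To handle it I would reproduce the argument of \cite[Section 5 and Lemma 7.3]{J-SPDE-1}: for every $\varphi \in L^2(\cD)$, prove that $t \mapsto \langle \overline Z(t), \varphi\rangle_{L^2}$ is a square-integrable martingale with respect to the filtration generated by $(\bar v, \overline B)$, with quadratic variation $\int_0^t \|g(\bar v(s))^\ast \varphi\|_\cH^2\, ds$ and cross-variation with each $\overline B_k$ equal to $\int_0^t q_k\langle g(\bar v(s))e_k, \varphi\rangle\, ds$. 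These identities follow by passing to the limit in their discrete counterparts, using \eqref{eq-g-funct}, continuity of $g$, the strong convergence \eqref{eq-conv-1}, and the bounds of Lemma \ref{lemma-5}. The martingale representation theorem then yields the identification, after which setting $\widetilde v := \bar v$ and $\widetilde B := \overline B$ on $(\widetilde\O,\widetilde\cF,\widetilde{\mathbb P}) := (\overline\O,\overline\cF,\overline{\mathbb P})$ produces the weak martingale solution and the stated convergences.
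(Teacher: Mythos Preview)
Your overall strategy matches the paper's: existence of the discrete solution, tightness and Skorokhod representation via Lemma~\ref{lemma-6}, term-by-term passage to the limit in the summed scheme, and identification of the limiting martingale $\overline Z$ as an It\^o integral against a new Brownian motion via a martingale representation theorem. The paper proceeds in exactly this order.

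There is, however, a concrete gap in your third stage. You test the scheme against $J_{\disc_m}\varphi$ and assert that consistency yields both $\Pi_{\disc_m}J_{\disc_m}\varphi \to \varphi$ in $L^2(\cD)$ and $\nabla_{\disc_m}J_{\disc_m}\varphi \to \nabla\varphi$ in $L^2(\cD)^d$. But in Definition~\ref{def-gd-rm} the operator $J_\disc$ is merely an $L^2$ interpolator for the initial datum, and the consistency property (item~2) only guarantees the first convergence; nothing is assumed about $\nabla_{\disc_m}J_{\disc_m}\varphi$. Without strong convergence of the discrete gradient of the test function you cannot pass to the limit in the diffusion term $\int_0^t \langle \nabla_{\disc_m}\widehat v_m, \nabla_{\disc_m}J_{\disc_m}\varphi\rangle\,\ud s$, because $\nabla_{\disc_m}\widehat v_m$ converges only weakly. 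The paper handles this by introducing, for each $\psi \in H_0^1(\cD)$, the interpolator
\[
I_{\disc}\psi := \argminB_{\varphi\in X_{\disc,0}}\Big(\|\Pi_\disc\varphi - \psi\|_{L^2(\cD)} + \|\nabla_\disc\varphi - \nabla\psi\|_{L^2(\cD)^d}\Big),
\]
so that $S_{\disc_m}(\psi)\to 0$ immediately gives both $\Pi_{\disc_m}I_{\disc_m}\psi \to \psi$ and $\nabla_{\disc_m}I_{\disc_m}\psi \to \nabla\psi$ strongly, together with the uniform bound $\|\Pi_{\disc_m}I_{\disc_m}\psi\|_{L^2}+\|\nabla_{\disc_m}I_{\disc_m}\psi\|_{L^2}\le 2$ needed to control the remainder integrals over $[t,\lceil t/\delta t_{\disc_m}\rceil\delta t_{\disc_m}]$. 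With $J_{\disc_m}$ replaced by $I_{\disc_m}$ your argument goes through. (A minor slip: the bilinear form for the implicit step carries $\delta t_\disc$, not $(\delta t_\disc)^2$, in front of the gradient term; this does not affect coercivity.)
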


\begin{proof}
Following the same reasoning as in \cite[Corollary 5.2]{YH-DRDM-1} and using the estimate \eqref{est-srm-1}, we can prove the existence of at least one solution $v$ to the scheme \eqref{gs-rm-weak}. 

Now, consider the smooth countable set $\{\psi_i \neq 0 \;:\; i\in\NN\}$ to introduce
\begin{equation}\label{psi-def}
w_i:=\dsp\frac{\psi_i}{(\|\psi_i\|_{\sn}+\|\nabla \psi_i\|_{L^2(\cD)^d})}.
\end{equation}
Let $I_\disc:H_0^1(\cD)\cap L^2(\cD) \to X_{\disc,0}$ be an interpolator defined by
\begin{equation}\label{interpol-def}
I_\disc w:=\arg\min_{\varphi\in X_{\disc,0}}\Big(\|\Pi_\disc \varphi - w\|_{\sn} + \|\nabla_\disc \varphi - \nabla w\|_{L^2(\cD)^d}\Big).
\end{equation}
Applying the consistency property with $\varphi=0$ together with $\|w_i\|_{\sn}+\|\nabla w_i\|_{L^2(\cD)^d}=1$, one has, thanks to \eqref{psi-def} and \eqref{interpol-def} 
\begin{equation}\label{est-interpol-1}
\begin{aligned}
&\|\Pi_\disc I_\disc w_i\|_{\sn} + \|\nabla_\disc I_\disc w_i\|_{L^2(\cD)^d}\\
&\leq \|\Pi_\disc I_\disc w_i - w_i\|_{\sn} + \|\nabla_\disc I_\disc w_i - \nabla w_i\|_{L^2(\cD)^d}\\
&\quad+ \|w_i\|_{\sn} + \|\nabla w_i\|_{L^2(\cD)^d}\\
&\leq S_\disc(w_i)+\|w_i\|_{\sn} + \|\nabla w_i\|_{L^2(\cD)^d}\\
&\leq 2.
\end{aligned}
\end{equation}
Thanks to Lemmas \ref{lemma-1} and \ref{lemma-4}, and the convergence stated in \eqref{eq-conv-5}, \cite[Lemmas 4.3 and 5.1]{J-SPDE-1} asserts that the process $\overline{Z}$ is a square integrable continuous martingale, and there exists a quadratic variation of $\overline Z$ given by for all $\alpha,\beta\in \sn$,
\[
Z(t)(\alpha,\beta)=\dsp\int_0^t \ld (g(\bar v)\cQ^{\frac{1}{2}})^\star(\alpha),(g(\bar v)\cQ^{\frac{1}{2}})^\star(\beta) \rd_K \ud s,\; \forall t\geq 0. 
\]
\cite[Theorem 8.2]{B-1} shows the existence of a probability space $(\widehat\cD,\widehat{\cF},\widehat{\mathbb P})$ and a $\cQ-$ Brownian motion $\widehat B$, in which $\overline Z$ and $\bar v$ are defined on this space and, for any $t\geq 0$,
\begin{equation}\label{eq-M}
\overline Z(t,\cdot)=\dsp\int_0^t g(\bar v)(s,\cdot)d\widehat B(s).
\end{equation}
For any $t\in [0,T]$ and for each $m\in\NN$, we have  a natural number $0\leq  k \leq N_{m-1}$, such that $t\in (t^{(k)},t^{(k+1)}]$. Recalling that $\bar v_m$ solves the gradient scheme with $\overline{B}_m$ replaced by $\overline B$, we have
\[
\begin{aligned}
&\ld d_{\disc_m}^{(n+\frac{1}{2})}\widehat v_m,\Pi_{\disc_m}\varphi \rd
+ \delta t_{\disc_m}\ld \nabla_{\disc_m}\widehat v_m^{(n+1)},\nabla_{\disc_m}\varphi \rd\\
&=\ld g(\Pi_{\disc_m}\widehat v_m^{(n+1)})\Delta^{(n+1)}\overline{B}_m,\Pi_{\disc_m}\varphi \rd
\quad+\delta t_{\disc_m}\ld S(\Pi_{\disc_m}\widehat v_m^{(n+1)}),\Pi_{\disc_m}\varphi \rd.
\end{aligned}
\]
Summing this relation from $n=0$ to $n=k$, and choosing $\varphi:=I_{\disc_m}\psi$, where $\psi \in H_0^1(\cD)$, we obtain,
\begin{equation}\label{eq-5-1}
\begin{aligned}
&\ld \Pi_{\disc_m}\widehat v_m(t),\Pi_{\disc_m}I_{\disc_m}\psi \rd
-\ld \Pi_{\disc_m}v^{(0)},\Pi_{\disc_m}I_{\disc_m}\psi \rd\\
&\quad+\dsp\sum_{n=0}^k \delta t_{\disc_m}\ld \nabla_{\disc_m}\widehat v_m^{(n+1)},\nabla_{\disc_m}I_{\disc_m}\psi \rd\\
&=\ld \widehat{M}_m,\Pi_{\disc_m}I_{\disc_m}\psi \rd
+\dsp\sum_{n=0}^k \delta t_{\disc_m}\ld S(\Pi_{\disc_m}\widehat v_m^{(n+1)}),\Pi_{\disc_m}I_{\disc_m}\psi \rd.
\end{aligned}
\end{equation}
Since $\Pi_{\disc_m}I_{\disc_m}\psi \to \psi$ in $L^2(\cD)$ and $(\disc_m)_{m\in\NN}$ is consistent, we have for almost every $t$, thanks to \eqref{eq-conv-5} and \eqref{eq-conv-6}
\begin{equation}\label{eq-5-2}
\begin{aligned}
&\ld \Pi_{\disc_m}\widehat v_m(t),\Pi_{\disc_m}I_{\disc_m}\psi \rd
\to \ld \widehat v(t),\psi \rd
\mbox{ in } L^2(\widehat\cD)\\
&\ld \Pi_{\disc_m}\widehat v^{(0)},\Pi_{\disc_m}I_{\disc_m}\psi \rd
\to \ld \widehat v_0,\psi \rd
\mbox{ in } L^2(\widehat\cD)\\
&\ld \widehat Z_m(t),\Pi_{\disc_m}I_{\disc_m}\psi \rd
\to \ld \widehat Z(t),\psi \rd
\mbox{ in } L^2(\widehat\cD). 
\end{aligned}
\end{equation}
Now, we notice that
\begin{equation}\label{eq-5-3}
\begin{aligned}
&\dsp\sum_{n=0}^k \delta t_{\disc_m}\ld \nabla_{\disc_m}\widehat v_m^{(n+1)},\nabla_{\disc_m}I_{\disc_m}\psi \rd\\
&=\dsp\int_0^t \ld \nabla_{\disc_m}\widehat v_m(s),\nabla_{\disc_m}I_{\disc_m}\psi \rd \ud s
+\dsp\int_t^{\lceil t/\delta t_{\disc_m} \rceil\delta t_{\disc_m}}
\ld \nabla_{\disc_m}\widehat v_m(s),\nabla_{\disc_m}I_{\disc_m}\psi \rd \ud s,
\end{aligned}
\end{equation}
and
\begin{equation}\label{eq-5-3-a}
\begin{aligned}
&\dsp\sum_{n=0}^k \delta t_{\disc_m}\ld S(\Pi_{\disc_m}\widehat v_m^{(n+1)}),\Pi_{\disc_m}I_{\disc_m}\psi \rd\\
&=\dsp\int_0^t \ld S(\Pi_{\disc_m}\widehat v_m(s)),\Pi_{\disc_m}I_{\disc_m}\psi \rd \ud s
+\dsp\int_t^{\lceil t/\delta t_{\disc_m} \rceil\delta t_{\disc_m}}
\ld S(\Pi_{\disc_m}\widehat v_m(s)),\Pi_{\disc_m}I_{\disc_m}\psi \rd \ud s,
\end{aligned}
\end{equation}
We make use the convergence stated in \eqref{eq-conv-2}, $\nabla_{\disc_m}I_{\disc_m}\psi \to \nabla\psi$ in $L^2(\cD)^d$, and $\Pi_{\disc_m}I_{\disc_m}\psi \to \psi$ in $L^2(\cD)^d$ to obtain, for any $t\in[0,T]$
\begin{equation}\label{eq-5-4}
\begin{aligned}
\dsp\int_0^t \ld \nabla_{\disc_m}\widehat v_m(s),\nabla_{\disc_m}I_{\disc_m}\psi \rd \ud s \to 
\dsp\int_0^t \ld \nabla\widehat v_m(s),\nabla\psi \rd \ud s,
\end{aligned}
\end{equation}
and
\begin{equation}\label{eq-5-4-a}
\begin{aligned}
\dsp\int_0^t \ld S(\Pi_{\disc_m}\widehat v_m(s)),\Pi_{\disc_m}I_{\disc_m}\psi \rd \ud s \to 
\dsp\int_0^t \ld S(\widehat v_m(s)),\psi \rd \ud s.
\end{aligned}
\end{equation}
These prove the convergence of the first term of the RHS of \eqref{eq-5-3} (resp. \eqref{eq-5-3-a}). Finally, as shown below, we observe that the expectation of the absolute value of the last term in the RHS of \eqref{eq-5-3} (resp. \eqref{eq-5-3-a}) tends to zero as $m\to\infty$, which concludes the proof.
\begin{equation}\label{eq-5-5}
\begin{aligned}
&\cE\Big[ \Big| \dsp\int_t^{\lceil t/\delta t_{\disc_m} \rceil\delta t_{\disc_m}}
\ld \nabla_{\disc_m}\widehat v_m(s),\nabla_{\disc_m}I_{\disc_m}\psi \rd \ud s \Big| \Big]\\
&\leq \cE\Big[  \dsp\int_t^{\lceil t/\delta t_{\disc_m} \rceil\delta t_{\disc_m}}
\| \nabla_{\disc_m}\widehat v_m(s)\|_{\tn} \| \nabla_{\disc_m}I_{\disc_m}\psi \|_{\tn} \ud s \Big]\\
&\leq C\delta t_{\disc_m}^{1/2}\cE\Big[ \Big( \dsp\int_0^T
\| \nabla_{\disc_m}\widehat v_m(s)\|_{\tn}^2 \ud s \Big)^{1/2} \Big]\\
&\leq C\delta t_{\disc_m}^{1/2},
\end{aligned}
\end{equation}
and
\begin{equation}\label{eq-5-5-a}
\begin{aligned}
&\cE\Big[ \Big| \dsp\int_t^{\lceil t/\delta t_{\disc_m} \rceil\delta t_{\disc_m}}
\ld S(\Pi_{\disc_m}\widehat v_m(s)),\Pi_{\disc_m}I_{\disc_m}\psi \rd \ud s \Big| \Big]\\
&\leq \cE\Big[  \dsp\int_t^{\lceil t/\delta t_{\disc_m} \rceil\delta t_{\disc_m}}
\| S(\Pi_{\disc_m}\widehat v_m(s))\|_{\sn} \| \Pi_{\disc_m}I_{\disc_m}\psi \|_{\sn} \ud s \Big]\\
&\leq \cE\Big[  \dsp\int_t^{\lceil t/\delta t_{\disc_m} \rceil\delta t_{\disc_m}}
L\| \Pi_{\disc_m}\widehat v_m(s)\|_{\sn} \| \Pi_{\disc_m}I_{\disc_m}\psi \|_{\sn} \ud s \Big]\\
&\quad+C_0\delta t_{\disc_m}\| \Pi_{\disc_m}I_{\disc_m}\psi \|_{\sn}\\
&\leq C\delta t_{\disc_m}^{1/2}\cE\Big[ \Big( \dsp\int_0^T
L\| \Pi_{\disc_m}\widehat v_m(s)\|_{\sn}^2 \ud s \Big)^{1/2} \Big]
+C_0\delta t_{\disc_m}\\
&\leq C(\delta t_{\disc_m}+\delta t_{\disc_m}^{1/2}).
\end{aligned}
\end{equation}
\end{proof}

\section{Numerical results}\label{sec-num}
To evaluate the validity of the gradient scheme for the stochastic reaction-diffusion model, we investigate a particular variant of a numerical method that falls in the gradient discretisation method; it is called the hybrid mimetic mixed method (HMM). We advise the reader to \cite[Section 4.2]{YH-DRDM-1} for the full presentation of this method for deterministic reaction-diffusion equations. As a test, we use the HMM method to solve the equation \eqref{problem-srm} with non-homogeneous Dirichlet boundary conditions over the domain $\cD=[-5,5]^2$, and with the reaction term $S(\bar v)=\bar v-{\bar v}^3$. The initial MATLAB code for the HMM scheme is provided as a companion of the work in \cite{HMM-code}. The model with a reaction function of this nature is referred to as the stochastic Allen-Cahn equation, and it has been the subject of extensive research in the literature. For instance,  \cite{N23,N24,N25,N26}, and the references provided there illustrate this. We initially select the noise term $g(\bar v)=\rho \bar v$, where  $\rho$ denotes the noise intensity. The exact travelling wave solution in such a case is given as \cite{N27}
\begin{equation}\label{eq14}
              \bar v(t,x,y)=(- \frac{c}{6}+2\sqrt{\frac{\rho^{2}+2}{8}-\frac{c^2}{48}} \tanh(\sqrt{\frac{\rho^{2}+2}{8}-\frac{c^2}{48}}(x+y-ct)))e^{\rho B(t)-\frac{\rho^2}{2}t},
\end{equation}
where $c$ is the wave speed and is chosen as $c=0.3$ in our numerical experiments. The analytical solution presented in \eqref{eq14} is used to extract the initial and boundary conditions.

Simulations are conducted on a sequence of triangular and Kershowa meshes, with a noise intensity of $\rho=0.2$, up to $T=1$. Samples of these meshes are presented in Figure \ref{fig1}. 
The averages are calculated over $100$ realizations of the Brownian motion, with time steps of $\delta t= 0.0001, 0.00025, 0.001$,  and $0.00125$, respectively,
where $\delta t=0.0001$ for the coarsest mesh and  $\delta t=0.00125$ for the finest one. The relative $L_2$-errors on the solution $\bar v$ and on the gradient $\nabla\bar v$ are respectively examined by the following norms:
\[
\frac{\cE\Big[\| \bar v(\cdot) - \Pi_\disc v\|_{L^{2}(\cD)}\Big]}{\cE\Big[\| \bar v(\cdot)\|_{L^{2}(\cD)}\Big]}
\mbox{ and }
\frac{\cE\Big[\| \nabla\bar v(\cdot) - \nabla_{\disc}v \|_{L^{2}(\cD)^2}\Big]}{\cE\Big[\| \nabla\bar v(\cdot) \|_{L^{2}(\cD)^2}\Big]},
\]
where $\cE[\cdot]$ denotes the expectation operator. 
The obtained errors and the corresponding rate of convergence with regard to the mesh size $h$ are given in Table \ref{tab1} for the triangular mesh and, in Table \ref{tab2} for the Kershowa mesh. 
We have observed that the HMM's convergence rate is of order $1$, which is consistent with the expectation of convergence rate for the first order methods. 

In addition, we investigate the validity of the HMM method by investigating the influences of multiplicative noise on the dynamics of the simulated waves that the stochastic Allen-Cahn model demonstrates. Two examples of such noise-induced waves dynamics are displayed in Figure \ref{fig2} and Figure \ref{fig3}.

Figure \ref{fig2} presents the HMM travelling wave solutions of the underlying model at $T=1$ for various noise intensity levels, designated as $\rho=0$, $\rho=1$, $\rho=5$, and $\rho=10$. The simulation was conducted on a triangular mesh with the size of $h=0.15625$ and a time step of $0.001$. In the absence of noise ($\rho=0$), the model exhibits a travelling wave with a maximal value of approximately $1$. Nevertheless, the wave begins to lose its wave-form shape when the noise is incorporated, and annihilates completely at high noise levels, as illustrated in Figure \ref{fig2} (c) for $\rho=5$ and Figure \ref{fig2} (d) for $\rho=10$. Consequently, the wave propagation is largely unaffected by weak noise, whereas it is unable to propagate in the strong noise regime. This phenomenon is known as \textit{ wave propagation failure} in which no propagation wave is observed.  

Additionally, we further evaluate the HMM method in the presence of another multiplicative noise source, denoted by 
$$g(\bar v)=\rho \bar v(\bar v-1),$$ 
where $\rho$ is the noise intensity. The intriguing aspect of this form of noise is that the fluctuations do not influence the model in either the rest state ($\bar v=0$) or the excited state ($\bar v=1$), as the noise vanishes at both states. The initial and boundary conditions are determined using the exact travelling wave solution \eqref{eq14} for the deterministic case ($\rho=0$). The simulation results are depicted in Figure \ref{fig3} for various levels of noise intensity, $\rho=0$, $\rho=1$, $\rho=2$, and $\rho=3$, while maintaining the other parameters, as in Figure \ref{fig2}. The results indicate that a single travelling wave is produced in the deterministic state ($\rho=0$). Nevertheless, \textit{the phenomenon of waves backfiring} can occur when an appropriate amount of noise is added. The transition from the excited state to the rest can be induced in the midst of the wave for sufficiently large noise ($\rho=3$), resulting in the wave breaking into two halves that are moving in opposite directions, as illustrated in Figure \ref{fig3} (d).        
 
\begin{figure}[ht]
	\begin{center}
	\begin{tabular}{cc}
	\includegraphics[width=0.40\linewidth]{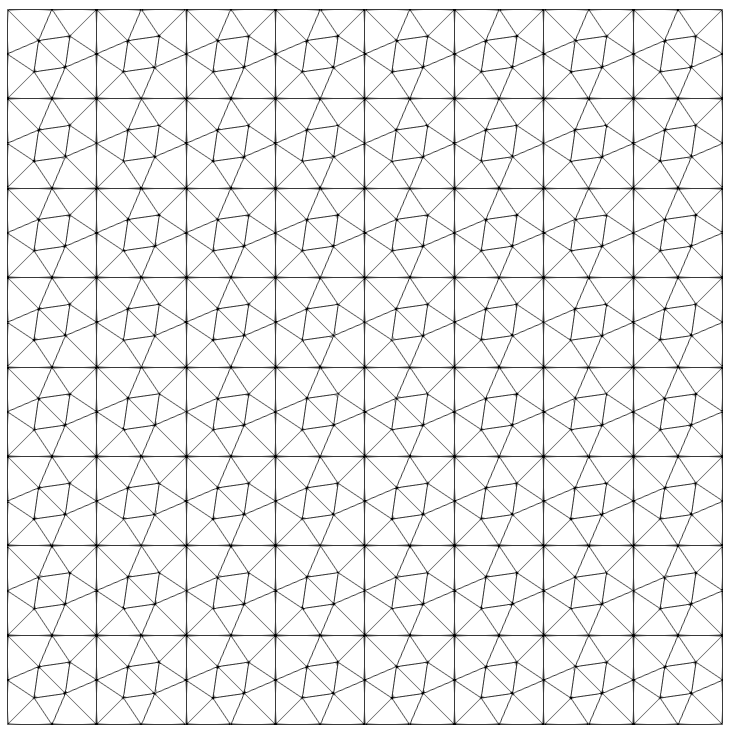}&
	\includegraphics[width=0.40\linewidth]{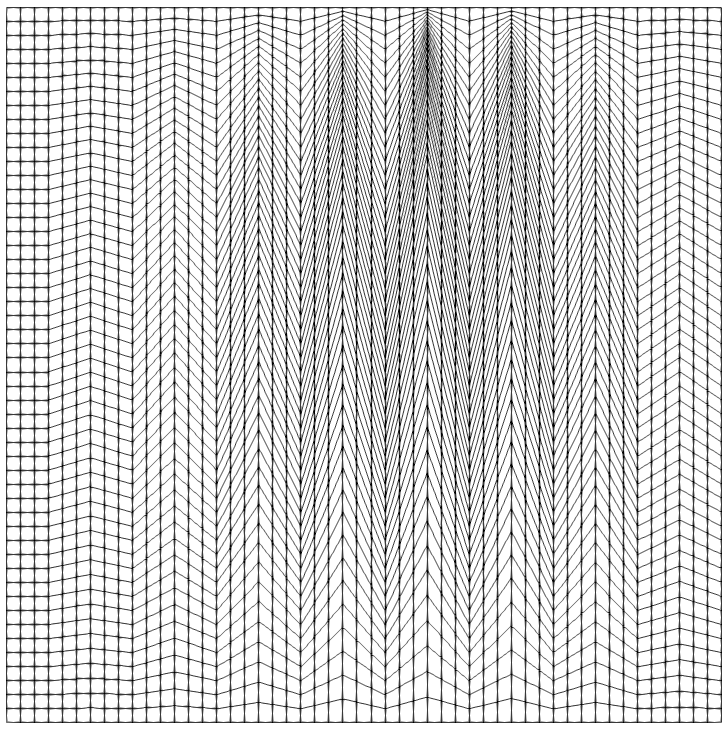}\\
	\texttt{(a) triangular mesh}&\texttt{(b) Kershowa mesh}\\
	\end{tabular}
	\end{center}
	\caption{Samples of the various 2D meshes}
	\label{fig1}
\end{figure}
\begin{figure}[ht]
	\begin{center}
	\begin{tabular}{cc}
	\includegraphics[width=0.40\linewidth]{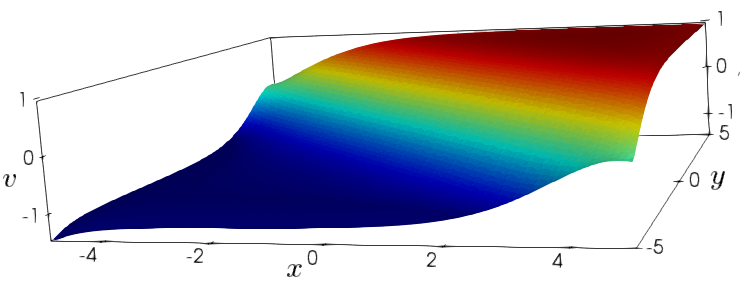}&
	\includegraphics[width=0.40\linewidth]{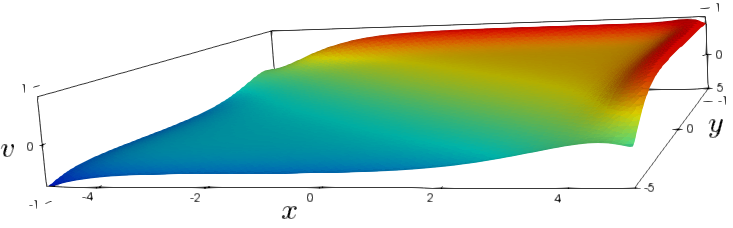}\\
	\texttt{(a) $\sigma=0$}&\texttt{(b) $\sigma=1$}\\
	\includegraphics[width=0.40\linewidth]{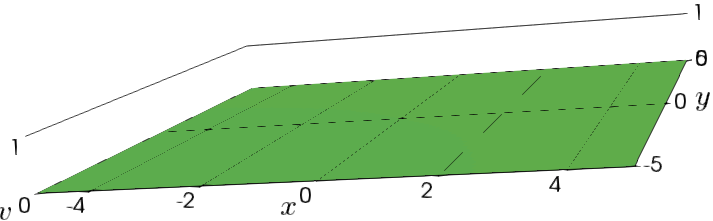}&
	\includegraphics[width=0.40\linewidth]{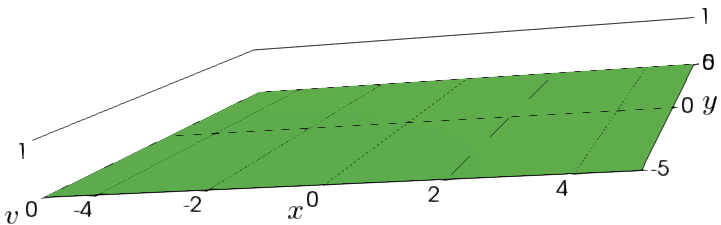}\\
	\texttt{(c) $\sigma=5$ }&\texttt{(d) $\sigma=10$ }\\
	\end{tabular}
	\end{center}
	\caption{Travelling wave solutions on the range $[-5,5]^2$, taking at levels of  noise $\rho=0$, $\rho=1$, $\rho=5$ and $\rho=10$, with parameters chosen as $\delta t=0.001$, $T=1$, and $c=0.3$.}
	\label{fig2}
\end{figure}
\begin{figure}[ht]
	\begin{center}
	\begin{tabular}{cc}
	\includegraphics[width=0.40\linewidth]{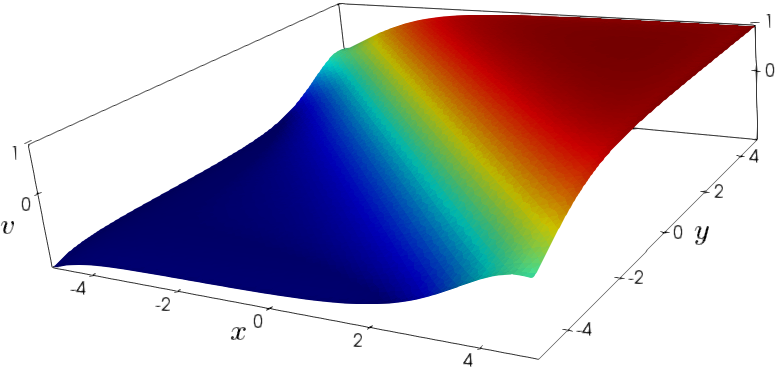}&
	\includegraphics[width=0.40\linewidth]{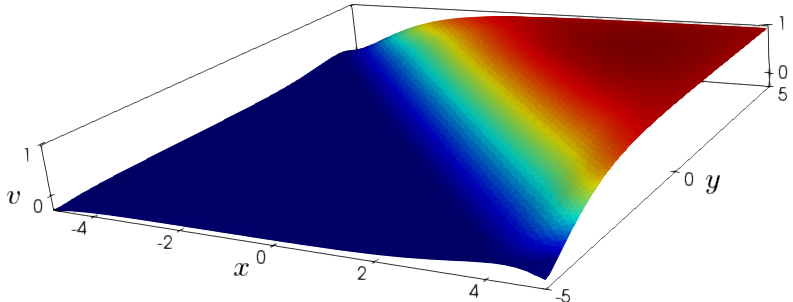}\\
	\texttt{(a) $\sigma=0$}&\texttt{(b) $\sigma=1$}\\
	\includegraphics[width=0.40\linewidth]{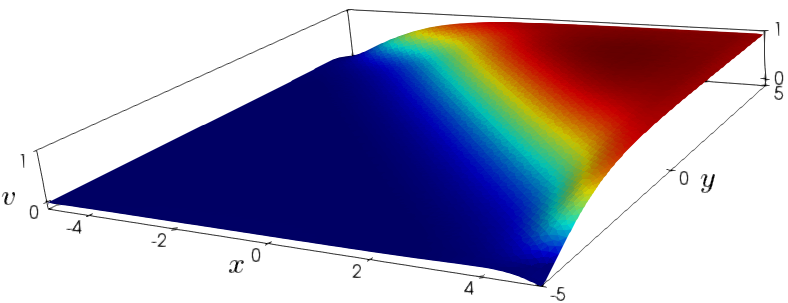}&
	\includegraphics[width=0.40\linewidth]{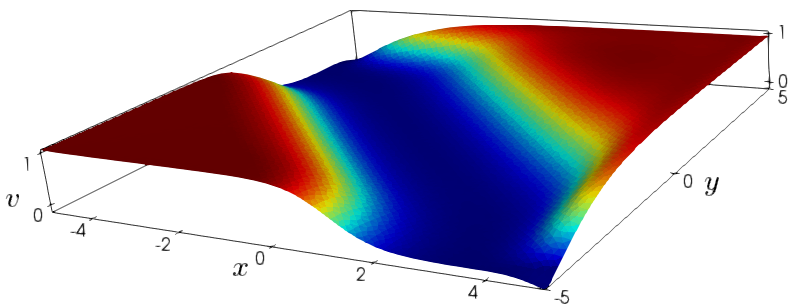}\\
	\texttt{(c) $\sigma=2$ }&\texttt{(d) $\sigma=3$ }\\
	\end{tabular}
	\end{center}
	\caption{Travelling wave solutions on the range $[-5,5]^2$, taking at levels of  noise $\rho=0$, $\rho=1$, $\rho=2$ and $\rho=3$, with parameters chosen as $\delta t=0.001$, $T=1$, and $c=0.3$.}
\label{fig3}
\end{figure}

\begin{table}[]
\begin{tabular}{c c c c c}
\hline
$h$ &
$\frac{\cE[\| \bar v(\cdot) - \Pi_\disc v\|_{L^{2}(\cD)}]}{\cE[\| \bar v(\cdot)\|_{L^{2}(\cD)}]}$&
rate&
$\frac{\cE[\| \nabla\bar v(\cdot) - \nabla_{\disc}v \|_{L^{2}(\cD)^2}]}{\cE[\| \nabla\bar v(\cdot) \|_{L^{2}(\cD)^2}]}$&
rate 
\\ \hline
0.6250000&
0.4708741&
--&
1.1986664&
--
\\ 
0.3125000&
0.2286052&
1.1986664&
0.6831530&
1.0424831
\\ 
0.1562500&
0.1299039&
0.8154136&
0.3359330&
1.0240353
\\ 
0.0781250&
0.0644115&
1.0120541&
0.1783213&
0.9136945
\\ \hline
\end{tabular}
\caption{The relative errors on $\bar v$ and $\nabla\bar v$ and the convergence rates w.r.t.  $h$ the size of triangular mesh with noise intensity $\rho=0.2$.}
\label{tab1}   
\end{table}
\begin{table}[]
\begin{tabular}{c c c c c}
\hline
$h$ &
$\frac{\cE[\| \bar v(\cdot) - \Pi_\disc v\|_{L^{2}(\cD)}]}{\cE[\| \bar v(\cdot)\|_{L^{2}(\cD)}]}$&
rate&
$\frac{\cE[\| \nabla\bar v(\cdot) - \nabla_{\disc}v \|_{L^{2}(\cD)^2}]}{\cE[\| \nabla\bar v(\cdot) \|_{L^{2}(\cD)^2}]}$&
rate 
\\ \hline
1.1155656&
0.8482250&
--&
1.2973389&
--
\\ 
0.8385224&
0.4732923&
2.0437227&
1.0346060&
0.7926934;
\\ 
0.6717051&
0.3026483&
2.0157696&
0.8655410&
0.8043447
\\ 
0.5602472&
0.2117210&
1.9692431&
0.7127184&
1.0706944
\\ \hline
\end{tabular}
\caption{The relative errors on $\bar v$ and $\nabla\bar v$ and the convergence rates w.r.t. $h$ the size of Kershowa mesh with noise intensity $\rho=0.2$.}
\label{tab2}   
\end{table}

\section{Conclusion} 
The GDM was used to develop a generic numerical analysis for a stochastic reaction-diffusion model driven by a multiplicative noise. Based on natural assumptions, we proved the existence of a weak martingale solution and established convergence results that are applicable to various numerical schemes. As a result of our analysis, we were able to develop an HMM method for the underlying model and investigate the effects of multiplicative noise on the behaviour of the solutions we obtained through numerical testing. We observed that the HMM's convergence rate is of order $1$, which matches the expectation of convergence rate for the first order methods. We also investigated the influences of this multiplicative noise on the dynamics of the travelling waves in the model under investigation, particularly the occurrence of wave propagation failure and wave backfiring as a result of sufficiently large noise.


\bibliographystyle{siam}
\bibliography{SRDM-ref}

\end{document}
